\documentclass[11pt]{amsart}

\usepackage{algorithm,algpseudocodex,amssymb,booktabs,caption,chngcntr,enumitem,mathtools,nicematrix,rotating,tikz}
\usepackage[colorlinks=true,allcolors=blue]{hyperref}
\usepackage[margin=1.5in]{geometry}

\captionsetup[algorithm]{labelfont=bf,labelsep=period}

\makeatletter
\g@addto@macro\bfseries{\boldmath}
\makeatother

\makeatletter
\newcommand*\vspacebeforeline[1]{%
    \ifvmode 
        \vskip #1
        \vskip \z@skip
    \else
        \@bsphack
        \vadjust pre {%
            \@restorepar
            \vskip #1
            \vskip \z@skip
        }%
        \@esphack
    \fi
}
\makeatother

\newenvironment{amssidewaysfigure}
  {\begin{figure}[ht!]\begin{turn}{90}\vspace*{.6975\textwidth}\begin{minipage}{0.95\textheight}}
  {\end{minipage}\end{turn}\end{figure}}


\newcommand{\trivialarray}
{\,\vspacebeforeline{2pt}\renewcommand{\arraystretch}{1.3}
\begin{NiceArray}{ccc}[hvlines,corners]
1 & 0 & 0
\end{NiceArray}\vspace{3pt}\,}

\DeclareMathOperator{\Sol}{Sol}

\DeclarePairedDelimiter{\floor}{\lfloor}{\rfloor}

\newtheorem{theorem}{Theorem}
\newtheorem{lemma}[theorem]{Lemma}
\newtheorem{proposition}[theorem]{Proposition}

\newcommand{\thistheoremname}{}
\newtheorem*{genericthm*}{\thistheoremname}
\newenvironment{namedthm*}[1]
  {\renewcommand{\thistheoremname}{#1}%
   \begin{genericthm*}}
  {\end{genericthm*}}

\theoremstyle{definition}
\newtheorem{move}{Move}

\theoremstyle{remark}
\newtheorem{example}[theorem]{Example}
\newtheorem{remark}[theorem]{Remark}

\makeatletter
\let\c@algorithm\c@theorem
\makeatother

\begin{document}

\title[On solutions of $\sum_{i=1}^n 1/x_i = 1$ in integers of the form $2^a k^b$]{On solutions of $\sum_{i=1}^n 1/x_i = 1$ in integers\\ of the form $2^a k^b$}

\author{Joel Louwsma}
\address{Department of Mathematics\\ Niagara University\\ Niagara University, NY 14109\\ USA}
\email{jlouwsma@niagara.edu}

\dedicatory{Dedicated to the memory of Ruth Favro (1938--2023)}

\begin{abstract}
We give an algorithm that produces all solutions of the equation $\sum_{i=1}^n 1/x_i = 1$ in integers of the form $2^a k^b$, where $k$ is a fixed positive integer that is not a power of~$2$, $a$ is an element of $\{0,1,2\}$ that can vary from term to term, and $b$ is a nonnegative integer that can vary from term to term. We also completely characterize the pairs $(k,n)$ for which this equation has a nontrivial solution in integers of this form.
\end{abstract}

\maketitle

\section{Introduction}

This paper studies solutions of the Diophantine equation
\begin{equation}\label{eq:main}
\sum_{i=1}^n\frac{1}{x_i}=1, \qquad x_1\leq\dotsb\leq x_n,
\end{equation}
in integers of the form $2^ak^b$, where $k$ is a fixed positive integer that is not a power of~$2$ and where $a$ and~$b$ are nonnegative integers that can vary from term to term. Solutions of~\eqref{eq:main} have previously been studied from a number of perspectives. For $n\leq8$, the number of solutions in positive integers is given in \cite[A002966]{OEIS} and the number of solutions in distinct positive integers is given in \cite[A006585]{OEIS}. It is a classical result of Curtiss~\cite{Curtiss} that for any solution of~\eqref{eq:main} the~$x_i$ are bounded by the $(n-1)$th term of the sequence \cite[A007018]{OEIS}. However, this sequence grows doubly exponentially, making it impractical to use a brute force approach to find all solutions for large~$n$. For general~$n$, bounds on the number of solutions are given in \cite{BE,EG,K14,S03}. 

Several authors have considered~\eqref{eq:main} with various restrictions. When the~$x_i$ are required to be distinct odd integers, Burshtein~\cite{B07} and Shiu~\cite{Shiu} showed that there are $5$~solutions when $n=9$, Arce-Nazario--Castro--Figueroa~\cite{ACF13} showed that there are 379,118 solutions when $n=11$, and Elsholtz~\cite{E16} gave bounds on the number of solutions for general~$n$. When the~$x_i$ are distinct integers of the form $3^{\alpha}5^{\beta}7^{\gamma}$, Burshtein~\cite{B08} found all solutions when $n=11$ and Arce-Nazario--Castro--Figueroa~\cite{ACF17} found all solutions when $n=13$ and $n=15$. Solutions in distinct integers of the form $pq$ and in distinct integers of the form $p^{\alpha}q^{\beta}$, where in both cases $p$ and~$q$ are prime numbers that are allowed to vary from term to term, were exhibited by Burshtein~\cite{B06}.

For a fixed positive integer~$k$, a number of authors have studied the asymptotic growth rate of the number of solutions of~\eqref{eq:main} in integers of the form~$k^a$; see~\cite{EHP} and the references therein. Given fixed prime numbers $p_1,\dotsc,p_t$, Chen--Elsholtz--Jiang~\cite{CEJ} gave bounds on the number of solutions in distinct integers of the form $p_1^{\alpha_1}\dotsm p_t^{\alpha_t}$. Levaillant~\cite{Lev} recently gave an algorithm for producing all solutions in distinct integers of the form $2^aq^b$, where $q$ is a fixed odd prime number, $a$ is an element of $\{0,1,2\}$ that can vary from term to term, and $b$ is a nonnegative integer that can vary from term to term.

In contrast to most previous work, we consider solutions of~\eqref{eq:main} where the~$x_i$ are not necessarily distinct, motivated in part by the fact that every such solution gives rise to an arithmetical structure on the complete graph~$K_n$ (see, for example, \cite[pp.~145--146]{GK}). We generalize \cite{Lev} by removing both the restriction that the~$x_i$ are distinct and the requirement that $q$ is prime. Given a positive integer~$k$ that is not a power of~$2$, let 
\[
S(k)=\{2^ak^b\mid a\in\{0,1,2\}, b\in\mathbb{Z}_{\geq0}\}.
\]
For fixed such~$k$, Algorithm~\ref{alg:main} produces all solutions of~\eqref{eq:main} with each $x_i\in S(k)$.

Unlike the approach of~\cite{Lev}, we encode a solution of this form as a collection of nonnegative integers $c_{a,b}$ that record the number of times the term $1/2^ak^b$ occurs in the solution. Since $k$ is not a power of~$2$, each element of $S(k)$ can be uniquely represented in the form $2^ak^b$, making $c_{a,b}$ well defined. We display the $c_{a,b}$ in an array where the columns are indexed from left to right by~$a$, beginning with index~$0$, and the rows are indexed from bottom to top by~$b$, beginning with index~$0$. Since for each~$x_i$ the exponent of~$2$ is in $\{0,1,2\}$, our arrays have three columns. When displaying these arrays, we always include the row indexed by~$0$, and it is assumed that all entries above those shown are~$0$. As an example, the array associated to the solution
\[
\frac{1}{2^2}+\frac{1}{2^2}+\frac{1}{2\cdot3}+\frac{1}{3^2}+\frac{1}{3^2}+\frac{1}{2^2\cdot3}+\frac{1}{2^2\cdot3^2}=1
\]
is
\[
\renewcommand{\arraystretch}{1.3}
\begin{NiceArray}{ccc}[hvlines,corners]
2 & 0 & 1\\
0 & 1 & 1\\
0 & 0 & 2
\end{NiceArray}\,.
\]

Since~\eqref{eq:main} requires that $x_1\leq\dotsb\leq x_n$ and $k$ is not a power of~$2$, every solution with each $x_i\in S(k)$ can be recovered from its associated array. Therefore it suffices to give an algorithm that produces all arrays corresponding to solutions of~\eqref{eq:main} of the desired form. Section~\ref{sec:moves} introduces a collection of moves and shows that they can be used to reduce each such array other than $\trivialarray$. Section~\ref{sec:algorithm} then shows how to generate each of these arrays from $\trivialarray$ in a unique way. Our algorithm does not easily generalize to solutions in integers of the form $2^a k^b$ where $a$ is allowed to be larger than~$2$ or to solutions in integers of the form $\ell^a k^b$ for $\ell\neq2$; this is discussed further in Remark~\ref{rem:generalizinghard}.

We say a solution of~\eqref{eq:main} is \emph{nontrivial} if $k$ divides some~$x_i$, or equivalently if the corresponding array has a nonzero entry that is not in the bottom row. A \emph{trivial} solution is one that corresponds to an array that has nonzero entries only in row~$0$. We are mainly interested in nontrivial solutions, and in Section~\ref{sec:characterize} we characterize the pairs $(k,n)$ for which there is a nontrivial solution in integers of the desired form.
\begin{namedthm*}{Theorem~\ref{thm:characterization}}
Let $k$ be a positive integer that is not a power of~$2$. Equation~\eqref{eq:main} has a nontrivial solution in integers in $S(k)$ if and only if $(k,n)=(3,3)$ or
\[
n\geq\begin{cases}
(k+8)/4 & \text{for } k\equiv0\pmod{4},\\
(k+11)/4 & \text{for } k\equiv1\pmod{4},\\
(k+10)/4 & \text{for } k\equiv2\pmod{4},\\
(k+13)/4 & \text{for } k\equiv3\pmod{4}.
\end{cases}
\]
\end{namedthm*}
The proof of Theorem~\ref{thm:characterization} constructs an explicit nontrivial solution for each pair $(k,n)$ that satisfies these conditions.

Although our approach does not require that the~$x_i$ are distinct, by checking which solutions produced by Algorithm~\ref{alg:main} have distinct $x_i$'s we also obtain an algorithm that produces all solutions of~\eqref{eq:main} in distinct integers of the same form. In the case when $k$ is a prime number, this recovers counts from~\cite{Lev}.

\section{The moves}\label{sec:moves}

In this section, we define five moves that can be used to transform arrays corresponding to solutions of~\eqref{eq:main} in integers of the form $2^ak^b$ into arrays corresponding to other such solutions, where $k$ is a fixed positive integer that is not a power of~$2$. Each of these moves can be applied in two directions; when they are applied in the forward direction we call them \emph{reduction moves}, and when they are applied in the reverse direction we call them \emph{expansion moves}. We will show that one of these reduction moves can be applied to each array other than $\trivialarray$ that corresponds to a solution of~\eqref{eq:main} in integers in $S(k)$.

\begin{move}\label{move:1}
This move adjusts two adjacent cells of an array by the rule
\[
\renewcommand{\arraystretch}{1.3}
\begin{NiceArray}{cc}[hvlines,corners]
\bullet & \bullet+2
\end{NiceArray}
\quad\longleftrightarrow\quad
\begin{NiceArray}{cc}[hvlines,corners]
\bullet+1 & \bullet
\end{NiceArray}\,,
\]
where the dots indicate nonnegative values that are unchanged. Move~\ref{move:1} encodes the relations
\[
\frac{1}{2k^b}+\frac{1}{2k^b}=\frac{1}{k^b}\quad\text{ and }\quad\frac{1}{4k^b}+\frac{1}{4k^b}=\frac{1}{2k^b}
\]
and changes the number of terms of a solution by~$1$.
\end{move}

\begin{move}\label{move:2}
The definition of this move depends on the residue of $k$ modulo~$4$. When $k\equiv0\pmod{4}$, it adjusts two consecutive rows of an array by the rule 
\[
\renewcommand{\arraystretch}{1.3}
\begin{NiceArray}{ccc}[hvlines,corners]
\bullet+\frac{k}{4} & \bullet & \bullet\\
\bullet & \bullet & \bullet
\end{NiceArray}
\quad\longleftrightarrow\quad
\begin{NiceArray}{ccc}[hvlines,corners]
\bullet & \bullet & \bullet\\
\bullet & \bullet & \bullet+1
\end{NiceArray}\,.
\]
This encodes the relation
\[
\underbrace{\frac{1}{k^{b+1}}+\dotsb+\frac{1}{k^{b+1}}}_{\text{$k/4$ times}}=\frac{1}{4k^b}
\]
and changes the number of terms of a solution by $k/4-1$. 

When $k\equiv1\pmod{4}$ or $k\equiv3\pmod{4}$, this move adjusts two consecutive rows of an array by the rule 
\[
\renewcommand{\arraystretch}{1.3}
\begin{NiceArray}{ccc}[hvlines,corners]
\bullet+k & \bullet & \bullet\\
\bullet & \bullet & \bullet
\end{NiceArray}
\quad\longleftrightarrow\quad
\begin{NiceArray}{ccc}[hvlines,corners]
\bullet & \bullet & \bullet\\
\bullet+1 & \bullet & \bullet
\end{NiceArray}\,.
\]
This encodes the relation
\[
\underbrace{\frac{1}{k^{b+1}}+\dotsb+\frac{1}{k^{b+1}}}_{\text{$k$ times}}=\frac{1}{k^b}
\]
and changes the number of terms of a solution by $k-1$. 

When $k\equiv2\pmod{4}$, this move adjusts two consecutive rows of an array by the rule 
\[
\renewcommand{\arraystretch}{1.3}
\begin{NiceArray}{ccc}[hvlines,corners]
\bullet+\frac{k}{2} & \bullet & \bullet\\
\bullet & \bullet & \bullet
\end{NiceArray}
\quad\longleftrightarrow\quad
\begin{NiceArray}{ccc}[hvlines,corners]
\bullet & \bullet & \bullet\\
\bullet & \bullet+1 & \bullet
\end{NiceArray}\,.
\]
This encodes the relation
\[
\underbrace{\frac{1}{k^{b+1}}+\dotsb+\frac{1}{k^{b+1}}}_{\text{$k/2$ times}}=\frac{1}{2k^b}
\]
and changes the number of terms of a solution by $k/2-1$. 
\end{move}

\begin{move}\label{move:3}
This move is only defined when $k$ is odd. When $k\equiv1\pmod{4}$, it adjusts two consecutive rows of an array by the rule 
\[
\renewcommand{\arraystretch}{1.3}
\begin{NiceArray}{ccc}[hvlines,corners]
\bullet+\frac{k-1}{4} & \bullet & \bullet+1\\
\bullet & \bullet & \bullet
\end{NiceArray}
\quad\longleftrightarrow\quad
\begin{NiceArray}{ccc}[hvlines,corners]
\bullet & \bullet & \bullet\\
\bullet & \bullet & \bullet+1
\end{NiceArray}\,.
\]
This encodes the relation
\[
\underbrace{\frac{1}{k^{b+1}}+\dotsb+\frac{1}{k^{b+1}}}_{\text{$(k-1)/4$ times}}+\frac{1}{4k^{b+1}}=\frac{1}{4k^{b}}
\]
and changes the number of terms of a solution by $(k-1)/4$.

When $k\equiv3\pmod{4}$, this move adjusts two consecutive rows of an array by the rule 
\[
\renewcommand{\arraystretch}{1.3}
\begin{NiceArray}{ccc}[hvlines,corners]
\bullet+\frac{3k-1}{4} & \bullet & \bullet+1\\
\bullet & \bullet & \bullet
\end{NiceArray}
\quad\longleftrightarrow\quad
\begin{NiceArray}{ccc}[hvlines,corners]
\bullet & \bullet & \bullet\\
\bullet & \bullet+1 & \bullet+1
\end{NiceArray}\,.
\]
This encodes the relation
\[
\underbrace{\frac{1}{k^{b+1}}+\dotsb+\frac{1}{k^{b+1}}}_{\text{$(3k-1)/4$ times}}+\frac{1}{4k^{b+1}}=\frac{1}{2k^{b}}+\frac{1}{4k^{b}}
\]
and changes the number of terms of a solution by $(3k-1)/4-1=(3k-5)/4$.
\end{move}

\begin{move}\label{move:4}
This move is only defined when $k$ is congruent to $1$, $2$, or~$3$ modulo~$4$. When $k\equiv1\pmod{4}$ or $k\equiv3\pmod{4}$, it adjusts two consecutive rows of an array by the rule 
\[
\renewcommand{\arraystretch}{1.3}
\begin{NiceArray}{ccc}[hvlines,corners]
\bullet+\frac{k-1}{2} & \bullet+1 & \bullet\\
\bullet & \bullet & \bullet
\end{NiceArray}
\quad\longleftrightarrow\quad
\begin{NiceArray}{ccc}[hvlines,corners]
\bullet & \bullet & \bullet\\
\bullet & \bullet+1 & \bullet
\end{NiceArray}\,.
\]
This encodes the relation
\[
\underbrace{\frac{1}{k^{b+1}}+\dotsb+\frac{1}{k^{b+1}}}_{\text{$(k-1)/2$ times}}+\frac{1}{2k^{b+1}}=\frac{1}{2k^{b}}
\]
and changes the number of terms of a solution by $(k-1)/2$.

When $k\equiv2\pmod{4}$, this move adjusts two consecutive rows of an array by the rule 
\[
\renewcommand{\arraystretch}{1.3}
\begin{NiceArray}{ccc}[hvlines,corners]
\bullet+\frac{k-2}{4} & \bullet+1 & \bullet\\
\bullet & \bullet & \bullet
\end{NiceArray}
\quad\longleftrightarrow\quad
\begin{NiceArray}{ccc}[hvlines,corners]
\bullet & \bullet & \bullet\\
\bullet & \bullet & \bullet+1
\end{NiceArray}\,.
\]
This encodes the relation
\[
\underbrace{\frac{1}{k^{b+1}}+\dotsb+\frac{1}{k^{b+1}}}_{\text{$(k-2)/4$ times}}+\frac{1}{2k^{b+1}}=\frac{1}{4k^{b}}
\]
and changes the number of terms of a solution by $(k-2)/4$.
\end{move}

\begin{move}\label{move:5}
This move is only defined when $k$ is odd. When $k\equiv1\pmod{4}$, it adjusts two consecutive rows of an array by the rule 
\[
\renewcommand{\arraystretch}{1.3}
\begin{NiceArray}{ccc}[hvlines,corners]
\bullet+\frac{3k-3}{4} & \bullet+1 & \bullet+1\\
\bullet & \bullet & \bullet
\end{NiceArray}
\quad\longleftrightarrow\quad
\begin{NiceArray}{ccc}[hvlines,corners]
\bullet & \bullet & \bullet\\
\bullet & \bullet+1 & \bullet+1
\end{NiceArray}\,.
\]
This encodes the relation
\[
\underbrace{\frac{1}{k^{b+1}}+\dotsb+\frac{1}{k^{b+1}}}_{\text{$(3k-3)/4$ times}}+\frac{1}{2k^{b+1}}+\frac{1}{4k^{b+1}}=\frac{1}{2k^{b}}+\frac{1}{4k^{b}}
\]
and changes the number of terms of a solution by $(3k-3)/4$.

When $k\equiv3\pmod{4}$, this move adjusts two consecutive rows of an array by the rule 
\[
\renewcommand{\arraystretch}{1.3}
\begin{NiceArray}{ccc}[hvlines,corners]
\bullet+\frac{k-3}{4} & \bullet+1 & \bullet+1\\
\bullet & \bullet & \bullet
\end{NiceArray}
\quad\longleftrightarrow\quad
\begin{NiceArray}{ccc}[hvlines,corners]
\bullet & \bullet & \bullet\\
\bullet & \bullet & \bullet+1
\end{NiceArray}\,.
\]
This encodes the relation
\[
\underbrace{\frac{1}{k^{b+1}}+\dotsb+\frac{1}{k^{b+1}}}_{\text{$(k-3)/4$ times}}+\frac{1}{2k^{b+1}}+\frac{1}{4k^{b+1}}=\frac{1}{4k^{b}}
\]
and changes the number of terms of a solution by $(k-3)/4+1=(k+1)/4$.
\end{move}

Note that Moves \ref{move:1}--\ref{move:5}, when applied in the forward direction for any~$k$ that is not a power of~$2$, always decrease the number of terms of the corresponding solution.

We next establish a useful lemma.

\begin{lemma}\label{lem:admissible}
Let $k$ be a positive integer that is not a power of~$2$, let $A$ be an array corresponding to a solution of~\eqref{eq:main} with each $x_i\in S(k)$, and let $\beta$ be the index of the top nonzero row of~$A$.
\begin{enumerate}[label=\textup{(\alph*)},ref=\textup{\alph*}]
\item If $\beta=0$, then $A$ is one of the following: 
\[
\renewcommand{\arraystretch}{1.3}
\begin{NiceArray}{ccc}[hvlines,corners]
1 & 0 & 0
\end{NiceArray} 
\qquad
\renewcommand{\arraystretch}{1.3}
\begin{NiceArray}{ccc}[hvlines,corners]
0 & 2 & 0
\end{NiceArray} 
\qquad
\renewcommand{\arraystretch}{1.3}
\begin{NiceArray}{ccc}[hvlines,corners]
0 & 1 & 2
\end{NiceArray} 
\qquad
\renewcommand{\arraystretch}{1.3}
\begin{NiceArray}{ccc}[hvlines,corners]
0 & 0 & 4
\end{NiceArray}\,.\label{part:trivialpossibilities} 
\]
\item If $\beta\geq1$, then $k$ divides $4c_{0,\beta}+2c_{1,\beta}+c_{2,\beta}$. \label{part:nontrivialadmissible}
\end{enumerate}
\end{lemma}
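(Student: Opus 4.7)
The plan is to treat the two parts as independent elementary arguments. Part (a) reduces to enumerating the nonnegative integer solutions of a linear Diophantine equation with a small right-hand side, while part (b) follows from the standard trick of clearing denominators and reading off a congruence modulo $k$.

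For (a), I would observe that when $\beta = 0$ all nonzero entries lie in row~$0$, so the solution equation becomes $c_{0,0} + c_{1,0}/2 + c_{2,0}/4 = 1$, equivalently $4c_{0,0} + 2c_{1,0} + c_{2,0} = 4$. Since there are only finitely many nonnegative integer triples satisfying this, I would split on $c_{0,0} \in \{0,1\}$: if $c_{0,0}=1$ the other entries must vanish, and if $c_{0,0}=0$ the residual equation $2c_{1,0}+c_{2,0}=4$ has exactly three solutions, for a total of the four arrays listed.

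For (b), I would set $N_b := 4c_{0,b} + 2c_{1,b} + c_{2,b}$, so that the defining equation takes the form $\sum_{b=0}^{\beta} N_b/(4k^b) = 1$. Multiplying through by $4k^{\beta}$ gives
\[
N_{\beta} + k\sum_{b=0}^{\beta-1} N_b\, k^{\beta-1-b} = 4k^{\beta}.
\]
Because $\beta \geq 1$, both $4k^{\beta}$ on the right and the entire sum on the left are divisible by $k$, so $k \mid N_{\beta}$, as required.

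There is no real obstacle to either step: both parts are short manipulations. The only subtlety worth noting is that the $c_{a,b}$ are well defined because, as already established in the paper, the representation of each element of $S(k)$ as $2^ak^b$ is unique whenever $k$ is not a power of~$2$; without this, the additive bookkeeping underlying both arguments would be ambiguous.
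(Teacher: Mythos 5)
Your proposal is correct and follows essentially the same route as the paper: part (a) by clearing denominators to get $4c_{0,0}+2c_{1,0}+c_{2,0}=4$ and enumerating the finitely many nonnegative solutions, and part (b) by multiplying through by $4k^{\beta}$ and reading off divisibility by $k$ (your explicit sum $k\sum_{b=0}^{\beta-1}N_b k^{\beta-1-b}$ is just the paper's unspecified integer multiple $sk$ written out). No substantive differences.
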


\begin{proof}
When $\beta=0$, multiplying~\eqref{eq:main} through by~$4$ gives $4c_{0,0}+2c_{1,0}+c_{2,0}=4$. Thus the only possibilities for $c_{0,0}$ are $1$ and~$0$. If $c_{0,0}=1$, then $c_{1,0}=c_{2,0}=0$. If $c_{0,0}=0$, the only possibilities for $c_{1,0}$ are $2$, $1$, and~$0$. If $c_{0,0}=0$ and $c_{1,0}=2$, then $c_{2,0}=0$. If $c_{0,0}=0$ and $c_{1,0}=1$, then $c_{2,0}=2$. If $c_{0,0}=c_{1,0}=0$, then $c_{2,0}=4$. This proves~(\ref{part:trivialpossibilities}).

When $\beta\geq1$, multiplying~\eqref{eq:main} through by $4k^\beta$ gives $4c_{0,\beta}+2c_{1,\beta}+c_{2,\beta}+sk=4k^\beta$ for some integer~$s$. We have that $k$ divides the right side of this equation, so therefore it also divides $4c_{0,\beta}+2c_{1,\beta}+c_{2,\beta}$, proving~(\ref{part:nontrivialadmissible}).
\end{proof}

We use this lemma to show that arrays corresponding to solutions of the form considered in this paper can be reduced by the above moves.

\begin{proposition}\label{prop:reductionmove}
Let $k$ be a positive integer that is not a power of~$2$, and suppose $A$ is an array corresponding to a solution of~\eqref{eq:main} with each $x_i\in S(k)$. If $A$ is not $\trivialarray$, then it can be reduced using one of Moves \ref{move:1}--\ref{move:5}. Moreover, this reduction can be chosen so that it edits the top nonzero row of~$A$.
\end{proposition}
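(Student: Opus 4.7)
The plan is to handle two regimes, separated by whether Move~\ref{move:1} alone suffices. Let $\beta$ denote the top nonzero row index of~$A$. If $c_{1, \beta} \geq 2$ or $c_{2, \beta} \geq 2$, then Move~\ref{move:1} applied within row~$\beta$ at columns $(0,1)$ or $(1,2)$, respectively, is a valid reduction that edits row~$\beta$. By Lemma~\ref{lem:admissible}(a), this disposes of the case $\beta = 0$ completely: each of the three non-$\trivialarray$ arrays listed there has an entry $\geq 2$ in column~$1$ or column~$2$. So I may assume $\beta \geq 1$ and $c_{1, \beta}, c_{2, \beta} \in \{0, 1\}$.

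In this remaining regime, Lemma~\ref{lem:admissible}(b) gives $k \mid 4 c_{0, \beta} + 2 c_{1, \beta} + c_{2, \beta}$, and I would choose a reduction from Moves~\ref{move:2}--\ref{move:5} by cases on $k \bmod 4$, always placing the move's ``top'' row at row~$\beta$ so that row~$\beta$ is edited. For $k \equiv 0 \pmod 4$, divisibility by~$4$ forces $c_{1, \beta} = c_{2, \beta} = 0$; combined with $c_{0, \beta} \geq 1$ (row~$\beta$ is nonzero) and $(k/4) \mid c_{0, \beta}$, this yields $c_{0, \beta} \geq k/4$, and Move~\ref{move:2} applies. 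For $k \equiv 2 \pmod 4$, divisibility by~$2$ first forces $c_{2, \beta} = 0$; then $c_{1, \beta} = 0$ yields $c_{0, \beta} \geq k/2$ and Move~\ref{move:2}, while $c_{1, \beta} = 1$ yields $(k/2) \mid 2 c_{0, \beta} + 1$, hence $c_{0, \beta} \geq (k-2)/4$ and Move~\ref{move:4}. For odd~$k$, I would enumerate the four values $(c_{1, \beta}, c_{2, \beta}) \in \{0,1\}^2$ and assign each to a specific move.

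The routine bookkeeping is concentrated in the odd case: the smallest $c_{0, \beta}$ compatible with the divisibility is determined by the least $j \geq 1$ with $jk \equiv 2 c_{1, \beta} + c_{2, \beta} \pmod{4}$, giving $c_{0, \beta} = (jk - 2 c_{1, \beta} - c_{2, \beta})/4$. A direct check in each of the eight subcases shows that this minimum matches exactly the $c_{0, \beta}$-threshold of Move~\ref{move:2}, \ref{move:4}, \ref{move:3}, or~\ref{move:5} for $(c_{1, \beta}, c_{2, \beta}) = (0,0), (1,0), (0,1), (1,1)$ respectively, with the values $(k-1)/4$ vs.\ $(3k-1)/4$ for Move~\ref{move:3}, and $(3k-3)/4$ vs.\ $(k-3)/4$ for Move~\ref{move:5}, depending on whether $k \equiv 1$ or $k \equiv 3 \pmod 4$. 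Each of Moves~\ref{move:2}--\ref{move:5} decrements some cell of its top row in the forward direction, so the reduction edits row~$\beta$ as required. The main obstacle will just be keeping the correspondence between $(c_{1, \beta}, c_{2, \beta})$-subcases and moves straight; once it is identified, each verification is a one-line arithmetic check.
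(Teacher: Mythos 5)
Your proposal is correct and follows essentially the same route as the paper's proof: dispose of $\beta=0$ and of $c_{1,\beta}\geq2$ or $c_{2,\beta}\geq2$ via Move~\ref{move:1}, then use Lemma~\ref{lem:admissible}(b) to write $4c_{0,\beta}+2c_{1,\beta}+c_{2,\beta}=mk$ and check that the smallest admissible $c_{0,\beta}$ meets the threshold of Move~\ref{move:2}, \ref{move:3}, \ref{move:4}, or~\ref{move:5} according to $(c_{1,\beta},c_{2,\beta})$ and $k\bmod 4$. The only difference is organizational (you split first on $k\bmod4$ where the paper splits first on $(c_{1,\beta},c_{2,\beta})$), and all your computed minima agree with the paper's.
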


\begin{proof}
Let $\beta$ be the index of the top nonzero row of~$A$. If $\beta=0$, then Lemma~\ref{lem:admissible}(\ref{part:trivialpossibilities}) gives the possible arrays. All except $\trivialarray$ can be reduced using Move~\ref{move:1}. 

Now suppose $\beta\geq1$. If $c_{1,\beta}\geq2$ or $c_{2,\beta}\geq2$, then $A$ can be reduced using Move~\ref{move:1}, so therefore it suffices to consider the cases when $c_{1,\beta}, c_{2,\beta}\in\{0,1\}$. Lemma~\ref{lem:admissible}(\ref{part:nontrivialadmissible}) implies that $4c_{0,\beta}+2c_{1,\beta}+c_{2,\beta}=mk$ for some positive integer~$m$, meaning $c_{0,\beta}=(mk-2c_{1,\beta}-c_{2,\beta})/4$. For each pair $(c_{1,\beta},c_{2,\beta})$, it is enough to show that a reduction move that edits row~$\beta$ can be applied for the smallest possible value of $c_{0,\beta}$; we thus consider the smallest value of~$m$ for which $(mk-2c_{1,\beta}-c_{2,\beta})/4$ is an integer. When $c_{1,\beta}=c_{2,\beta}=0$, the smallest such~$m$ is~$1$ when $k\equiv0\pmod{4}$, the smallest such~$m$ is~$4$ when $k\equiv1\pmod{4}$ or $k\equiv3\pmod{4}$, and the smallest such~$m$ is~$2$ when $k\equiv2\pmod{4}$. In each of these cases, $A$ can be reduced using Move~\ref{move:2}. When $c_{1,\beta}=0$ and $c_{2,\beta}=1$, Lemma~\ref{lem:admissible}(\ref{part:nontrivialadmissible}) only allows odd~$k$. When $k\equiv1\pmod{4}$ the smallest~$m$ is~$1$, and when $k\equiv3\pmod{4}$ the smallest~$m$ is~$3$. In both of these cases, $A$ can be reduced using Move~\ref{move:3}. When $c_{1,\beta}=1$ and $c_{2,\beta}=0$, Lemma~\ref{lem:admissible}(\ref{part:nontrivialadmissible}) excludes $k\equiv0\pmod{4}$. When $k\equiv1\pmod{4}$ or $k\equiv3\pmod{4}$ the smallest~$m$ is~$2$, and when $k\equiv2\pmod{4}$ the smallest~$m$ is~$1$. In each of these cases, $A$ can be reduced using Move~\ref{move:4}. When $c_{1,\beta}=c_{2,\beta}=1$, Lemma~\ref{lem:admissible}(\ref{part:nontrivialadmissible}) only allows odd~$k$. When $k\equiv1\pmod{4}$ the smallest~$m$ is~$3$, and when $k\equiv3\pmod{4}$ the smallest~$m$ is~$1$. In both of these cases, $A$ can be reduced using Move~\ref{move:5}. Each of the moves used here edits row~$\beta$, so this completes the proof. 
\end{proof}

\begin{remark}\label{rem:generalizinghard}
The problem considered in this paper could be generalized by fixing a positive integer~$\alpha$ and seeking solutions of~\eqref{eq:main} in integers $x_i=2^{a_i}k^{b_i}$ where the $a_i$ and~$b_i$ are nonnegative integers with each $a_i\leq\alpha$. There is a family of $2^{\alpha}+1$ reduction moves of the type given in this section such that one can be applied to each array other than $\trivialarray$ that corresponds to a solution of this form. However, these reduction moves are not guaranteed to decrease the number of terms of a solution, whereas this is essential to the algorithm given in Section~\ref{sec:algorithm}. As an example, when $k=3$ and $\alpha=3$ we have the move
\[
\renewcommand{\arraystretch}{1.3}
\begin{NiceArray}{cccc}[hvlines,corners]
\bullet+1 & \bullet & \bullet & \bullet+1\\
\bullet & \bullet & \bullet & \bullet
\end{NiceArray}
\quad\longleftrightarrow\quad
\begin{NiceArray}{cccc}[hvlines,corners]
\bullet & \bullet & \bullet & \bullet\\
\bullet & \bullet & \bullet+1 & \bullet+1
\end{NiceArray}\,,
\]
which preserves the number of terms. This move could be adjusted to
\[
\renewcommand{\arraystretch}{1.3}
\begin{NiceArray}{cccc}[hvlines,corners]
\bullet+1 & \bullet & \bullet & \bullet+1\\
\bullet & \bullet & \bullet & \bullet\\
\bullet & \bullet & \bullet & \bullet
\end{NiceArray}
\quad\longleftrightarrow\quad
\begin{NiceArray}{cccc}[hvlines,corners]
\bullet & \bullet & \bullet & \bullet\\
\bullet & \bullet & \bullet & \bullet\\
\bullet & \bullet & \bullet & \bullet+1
\end{NiceArray}\,
\]
to obtain an algorithm for $\alpha=3$, but the situation gets worse for larger~$\alpha$. When $k=3$ and $\alpha=5$, the row 
\,$\vspacebeforeline{2pt}\renewcommand{\arraystretch}{1.3}
\begin{NiceArray}{cccccc}[hvlines,corners]
1 & 0 & 0 & 0 & 0 & 1
\end{NiceArray}\vspace{3pt}$\,
can be the top nonzero row of the array corresponding to a solution, but the applicable reduction move cannot be adjusted so as to decrease the number of terms of the solution. 

The problem considered in this paper could also be generalized by fixing positive integers $\ell$ and~$k$ and considering solutions of~\eqref{eq:main} in integers $x_i=\ell^{a_i}k^{b_i}$ with each $a_i\in\{0,1,2\}$ and each~$b_i$ a nonnegative integer. There is then a family of $\ell^2+1$ reduction moves of the type given in this section such that one can be applied to each array other than $\trivialarray$ that corresponds to a solution of this form. However, these moves may not decrease the number of terms of a solution. For example, if $\ell=3$ and $k=5$, we have the move
\[
\renewcommand{\arraystretch}{1.3}
\begin{NiceArray}{ccc}[hvlines,corners]
\bullet+1 & \bullet & \bullet+1\\
\bullet & \bullet & \bullet
\end{NiceArray}
\quad\longleftrightarrow\quad
\begin{NiceArray}{ccc}[hvlines,corners]
\bullet & \bullet & \bullet\\
\bullet & \bullet & \bullet+2
\end{NiceArray}\,,
\]
which preserves the number of terms.
\end{remark}

\section{The algorithm}\label{sec:algorithm}

In this section, we give an algorithm that produces all solutions of~\eqref{eq:main} with each $x_i\in S(k)$. Our approach is to produce each array corresponding to such a solution as an expansion of $\trivialarray$ in a preferred way.

We first define a prioritization of the reduction moves given in Section~\ref{sec:moves}. Let $\beta$ be the index of the top nonzero row of the array corresponding to a solution. The prioritization of reductions that edit row~$\beta$ is as follows: 
\begin{enumerate}[label=\textup{(\arabic*)},ref=\textup{\arabic*}]
\item If $c_{2,\beta}\geq2$, reduce this entry using Move~\ref{move:1}.
\item If $c_{1,\beta}\geq2$, reduce this entry using Move~\ref{move:1}.
\item Reduce row~$\beta$ using Move~\ref{move:2} if possible.
\item Reduce row~$\beta$ using Move~\ref{move:3} if applicable and possible.
\item Reduce row~$\beta$ using Move~\ref{move:4} if applicable and possible.
\item Reduce row~$\beta$ using Move~\ref{move:5} if applicable and possible.
\end{enumerate}
Proposition~\ref{prop:reductionmove} guarantees that one of these moves can be applied to every array other than $\trivialarray$ that corresponds to a solution of~\eqref{eq:main} with each $x_i\in S(k)$. For an array corresponding to such a solution, its \emph{priority reduction} is given by reducing under the highest priority of these moves that can be applied. An expansion is called a \emph{priority expansion} if its inverse is a priority reduction.

\begin{proposition}\label{prop:priority}
Let $k$ be a positive integer that is not a power of~$2$, and suppose $A$ is an array corresponding to a solution of~\eqref{eq:main} with each $x_i\in S(k)$.
\begin{enumerate}[label=\textup{(\alph*)},ref=\textup{\alph*}]
\item There is a unique sequence of priority reductions that reduces $A$ to \linebreak$\trivialarray$. \label{part:priorityreduction}
\item There is a unique sequence of priority expansions that expands $\trivialarray$ to~$A$. \label{part:priorityexpansion}
\end{enumerate}
\end{proposition}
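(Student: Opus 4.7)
The plan is to deduce part (a) first from the fact that every reduction move strictly decreases the term count, and then to obtain part (b) by reversing the sequence from (a).

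For part (a), I would proceed as follows. As noted at the end of Section~\ref{sec:moves}, each of Moves~\ref{move:1}--\ref{move:5}, applied in the forward direction for any $k$ that is not a power of~$2$, strictly decreases the number of terms~$n$ of the corresponding solution. Starting from $A$, I would iterate the following process: if the current array is not $\trivialarray$, apply its priority reduction. Proposition~\ref{prop:reductionmove} guarantees that some reduction from the prioritized list is applicable, so the priority reduction is well defined, and it is unique at each step because it is defined as the reduction of highest priority that can be performed. The sequence of~$n$ values produced strictly decreases, so the process terminates after finitely many steps at an array admitting no reduction; by Proposition~\ref{prop:reductionmove} that array must be~$\trivialarray$. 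This yields a sequence of priority reductions $A = A_0 \to A_1 \to \dotsb \to A_m = \trivialarray$. Uniqueness is immediate: at each stage the continuation is forced, so any sequence of priority reductions from $A$ to $\trivialarray$ must agree with this one step by step.

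For part (b), existence follows by reversing. Given the sequence from part~(a), read it backwards: $\trivialarray = A_m \to A_{m-1} \to \dotsb \to A_0 = A$. Each step $A_{i+1} \to A_i$ is the reverse of a priority reduction $A_i \to A_{i+1}$ and is therefore a priority expansion by definition, so this gives a sequence of priority expansions expanding $\trivialarray$ to~$A$. For uniqueness, suppose $\trivialarray = B_0 \to B_1 \to \dotsb \to B_\ell = A$ is any sequence of priority expansions expanding $\trivialarray$ to~$A$. Reversing produces $A = B_\ell \to B_{\ell-1} \to \dotsb \to B_0 = \trivialarray$; by the definition of priority expansion, each reversed step is the priority reduction of the array it is applied to. This is a sequence of priority reductions from $A$ to~$\trivialarray$, and by part~(a) it coincides with the sequence constructed there. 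Hence $\ell = m$ and $B_i = A_{m-i}$ for all~$i$, so the expansion sequence is unique.

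The main conceptual point is termination in part~(a); once that is in hand, both uniqueness statements are formal, since the prioritization scheme selects a canonical move at each non-trivial array and expansions are inverses of reductions by definition. I do not foresee any real obstacle beyond confirming that the priority reduction is indeed defined on every non-trivial array of the required form, which is exactly the content of Proposition~\ref{prop:reductionmove}.
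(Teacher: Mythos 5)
Your proposal is correct and follows essentially the same route as the paper: apply priority reductions repeatedly, use the strict decrease in the number of terms for termination and the canonical choice of move for uniqueness, then obtain part (b) by reversing the sequence. The paper's proof is just a more compressed version of the same argument.
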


\begin{proof}
To prove~(\ref{part:priorityreduction}), we successively apply priority reductions, which Proposition~\ref{prop:reductionmove} guarantees can be done as long as we do not have $\trivialarray$. Each of these reductions decreases the number of terms of the corresponding solution, and this ensures that after applying finitely many priority reductions we reach $\trivialarray$. Part~(\ref{part:priorityexpansion}) follows as the sequence of priority expansions is characterized by being the inverse of the sequence of priority reductions from~(\ref{part:priorityreduction}).
\end{proof}

\begin{example}
Let $k=3$. The following is a sequence of priority reductions, where the numbers above the arrows indicate the moves used.
\[
\renewcommand{\arraystretch}{1.3}
\begin{NiceArray}{ccc}[hvlines,corners]
1 & 0 & 2\\
0 & 1 & 0
\end{NiceArray}
\quad\overset{1}{\longrightarrow}\quad
\begin{NiceArray}{ccc}[hvlines,corners]
1 & 1 & 0\\
0 & 1 & 0
\end{NiceArray}
\quad\overset{4}{\longrightarrow}\quad
\begin{NiceArray}{ccc}[hvlines,corners]
0 & 2 & 0
\end{NiceArray}
\quad\overset{1}{\longrightarrow}\quad
\begin{NiceArray}{ccc}[hvlines,corners]
1 & 0 & 0
\end{NiceArray}\,.
\]
In the reverse direction, this is a sequence of priority expansions.
\end{example}

Since Proposition~\ref{prop:priority}(\ref{part:priorityexpansion}) shows that every array corresponding to a solution of~\eqref{eq:main} with each $x_i\in S(k)$ can be obtained from $\trivialarray$ under a unique sequence of priority expansions, we obtain an procedure, described in Algorithm~\ref{alg:main}, that recursively constructs all such arrays. As solutions of~\eqref{eq:main} can be recovered from these arrays, this algorithm produces all solutions with each $x_i\in S(k)$.

\begin{algorithm}
\caption{Produce all solutions of~\eqref{eq:main} with each $x_i\in S(k)$}\label{alg:main}
\begin{algorithmic}
\Require A positive integer~$k$ that is not a power of~$2$ and a positive integer~$n$
\Ensure All solutions of~\eqref{eq:main} with each $x_i\in S(k)$
\State $\Sol(j)\coloneqq\emptyset$ for all $j\in\mathbb{Z}$
\State $\Sol(1)\coloneqq\left\{\trivialarray\right\}$
\For{$j=2,\dotsc,n$}
\ForAll{$\text{solution}\in \Sol(j-1)$}
\State $\Sol(j)\coloneqq \Sol(j)\cup\text{PriorityExpansionsUnderMove1}(\text{solution})$
\EndFor
\If{$k\equiv0\pmod{4}$}
\ForAll{$\text{solution}\in \Sol(j-(k/4-1))$}
\State $\Sol(j)\coloneqq \Sol(j)\cup\text{PriorityExpansionsUnderMove2}(\text{solution})$
\EndFor
\EndIf
\If{$k\equiv1\pmod{4}$}
\ForAll{$\text{solution}\in \Sol(j-(k-1))$}
\State $\Sol(j)\coloneqq \Sol(j)\cup\text{PriorityExpansionsUnderMove2}(\text{solution})$
\EndFor
\ForAll{$\text{solution}\in \Sol(j-(k-1)/4)$}
\State $\Sol(j)\coloneqq \Sol(j)\cup\text{PriorityExpansionsUnderMove3}(\text{solution})$
\EndFor
\ForAll{$\text{solution}\in \Sol(j-(k-1)/2)$}
\State $\Sol(j)\coloneqq \Sol(j)\cup\text{PriorityExpansionsUnderMove4}(\text{solution})$
\EndFor
\EndIf
\If{$k\equiv2\pmod{4}$}
\ForAll{$\text{solution}\in \Sol(j-(k/2-1))$}
\State $\Sol(j)\coloneqq \Sol(j)\cup\text{PriorityExpansionsUnderMove2}(\text{solution})$
\EndFor
\ForAll{$\text{solution}\in \Sol(j-(k-2)/4)$}
\State $\Sol(j)\coloneqq \Sol(j)\cup\text{PriorityExpansionsUnderMove4}(\text{solution})$
\EndFor
\EndIf
\If{$k\equiv3\pmod{4}$}
\ForAll{$\text{solution}\in \Sol(j-(k-1))$}
\State $\Sol(j)\coloneqq \Sol(j)\cup\text{PriorityExpansionsUnderMove2}(\text{solution})$
\EndFor
\ForAll{$\text{solution}\in \Sol(j-(3k-5)/4)$}
\State $\Sol(j)\coloneqq \Sol(j)\cup\text{PriorityExpansionsUnderMove3}(\text{solution})$
\EndFor
\ForAll{$\text{solution}\in \Sol(j-(k-1)/2)$}
\State $\Sol(j)\coloneqq \Sol(j)\cup\text{PriorityExpansionsUnderMove4}(\text{solution})$
\EndFor
\ForAll{$\text{solution}\in \Sol(j-(k+1)/4)$}
\State $\Sol(j)\coloneqq \Sol(j)\cup\text{PriorityExpansionsUnderMove5}(\text{solution})$
\EndFor
\EndIf
\EndFor
\Return{$\Sol(n)$}
\end{algorithmic}
\end{algorithm}

Note that Move~\ref{move:5} is not needed when $k\equiv1\pmod{4}$ because in this case any expansion under Move~\ref{move:5} can be reduced by Move~\ref{move:3} and is thus not a priority expansion. 

Algorithm~\ref{alg:main} gives all arrays corresponding to solutions of~\eqref{eq:main} with each $x_i\in S(k)$ as a tree descending from $\trivialarray$. A piece of this tree for $k=3$ is shown in Appendix~\ref{app:tree3}, and a piece of this tree for $k=5$ is shown in Appendix~\ref{app:tree5}. In both of these trees, the four trivial solutions are shown in red and the nontrivial solutions are shown in black.

We have implemented Algorithm~\ref{alg:main} in \texttt{Python} and used it to generate solutions for a number of values of $k$ and~$n$. Table~\ref{tab:nondistinctcount} records the counts of nontrivial solutions of~\eqref{eq:main} in integers in $S(k)$ for $k\leq 19$ and $n\leq14$. This algorithm is efficient in practice; the data in Table~\ref{tab:nondistinctcount} were produced in less than $1$~minute on a standard personal computer.

\begin{table}
\begin{tabular}{ccccccccccccccc}
\toprule
& \multicolumn{14}{c}{$k$}\\
\cmidrule{2-15}
$n$ & $3$ & $5$ & $6$ & $7$ & $9$ & $10$ & $11$ & $12$ & $13$ & $14$ & $15$ & $17$ & $18$ & $19$\\
\midrule
1 & 0 & 0 & 0 & 0 & 0 & 0 & 0 & 0 & 0 & 0 & 0 & 0 & 0 & 0\\
2 & 0 & 0 & 0 & 0 & 0 & 0 & 0 & 0 & 0 & 0 & 0 & 0 & 0 & 0\\
3 & 2 & 0 & 0 & 0 & 0 & 0 & 0 & 0 & 0 & 0 & 0 & 0 & 0 & 0\\
4 & 7 & 2 & 2 & 0 & 0 & 0 & 0 & 0 & 0 & 0 & 0 & 0 & 0 & 0\\
5 & 21 & 7 & 5 & 2 & 1 & 1 & 0 & 1 & 0 & 0 & 0 & 0 & 0 & 0\\
6 & 64 & 14 & 10 & 6 & 3 & 4 & 1 & 2 & 1 & 1 & 0 & 0 & 0 & 0\\
7 & 187 & 30 & 19 & 10 & 7 & 6 & 4 & 4 & 2 & 3 & 1 & 1 & 1 & 0\\
8 & 565 & 69 & 37 & 17 & 11 & 8 & 7 & 7 & 4 & 5 & 3 & 2 & 3 & 1\\
9 & 1,698 & 159 & 75 & 37 & 19 & 15 & 9 & 9 & 9 & 7 & 5 & 3 & 4 & 3\\
10 & 5,140 & 361 & 150 & 64 & 34 & 25 & 15 & 14 & 12 & 10 & 8 & 6 & 6 & 4\\
11 & 15,561 & 822 & 301 & 127 & 62 & 42 & 26 & 22 & 16 & 14 & 11 & 10 & 9 & 6\\
12 & 47,188 & 1,886 & 605 & 263 & 111 & 75 & 43 & 38 & 29 & 21 & 15 & 13 & 11 & 10\\
13 & 143,138 & 4,322 & 1,216 & 493 & 201 & 120 & 74 & 62 & 46 & 35 & 24 & 18 & 15 & 12\\
14 & 434,378 & 9,897 & 2,448 & 967 & 368 & 203 & 124 & 98 & 73 & 58 & 37 & 27 & 21 & 16\\
\bottomrule
\end{tabular}
\caption{Number of nontrivial solutions of~\eqref{eq:main} in integers in $S(k)$ for $k\leq 19$ and $n\leq14$.\label{tab:nondistinctcount}}
\end{table}

By checking which of the solutions counted in Table~\ref{tab:nondistinctcount} satisfy the additional condition that the~$x_i$ are distinct, we also obtain counts of nontrivial solutions of~\eqref{eq:main} in distinct integers in $S(k)$. These counts for $n\leq14$ are shown in Table~\ref{tab:distinctcount}; for prime~$k$, they recover results of Levaillant~\cite{Lev}. When $k\geq9$, all expansions result in an array with an entry at least~$2$, meaning there are no nontrivial solutions in distinct integers.

\begin{table}
\begin{tabular}{cccccc}
\toprule
& \multicolumn{5}{c}{$k$}\\
\cmidrule{2-6}
$n$ & $3$ & $5$ & $6$ & $7$ & $\geq9$\\
\midrule
1 & 0 & 0 & 0 & 0 & 0\\
2 & 0 & 0 & 0 & 0 & 0\\
3 & 1 & 0 & 0 & 0 & 0\\
4 & 2 & 1 & 1 & 0 & 0\\
5 & 4 & 1 & 0 & 1 & 0\\
6 & 8 & 1 & 1 & 0 & 0\\
7 & 14 & 1 & 0 & 1 & 0\\
8 & 28 & 1 & 1 & 0 & 0\\
9 & 52 & 1 & 0 & 1 & 0\\
10 & 100 & 1 & 1 & 0 & 0\\
11 & 190 & 1 & 0 & 1 & 0\\
12 & 362 & 1 & 1 & 0 & 0\\
13 & 690 & 1 & 0 & 1 & 0\\
14 & 1,314 & 1 & 1 & 0 & 0\\
\bottomrule
\end{tabular}
\caption{Number of nontrivial solutions of~\eqref{eq:main} in distinct integers in $S(k)$ for $n\leq 14$.\label{tab:distinctcount}}
\end{table}

\section{When there exist nontrivial solutions}\label{sec:characterize}

Observe that, in Table~\ref{tab:nondistinctcount}, there are no solutions in the upper right but are always solutions in the lower left. In this section, we turn this observation into a precise characterization of which pairs $(k,n)$ correspond to nontrivial solutions.

\begin{theorem}\label{thm:characterization}
Let $k$ be a positive integer that is not a power of~$2$. Equation~\eqref{eq:main} has a nontrivial solution in integers in $S(k)$ if and only if $(k,n)=(3,3)$ or
\[
n\geq\begin{cases}
(k+8)/4 & \text{for } k\equiv0\pmod{4},\\
(k+11)/4 & \text{for } k\equiv1\pmod{4},\\
(k+10)/4 & \text{for } k\equiv2\pmod{4},\\
(k+13)/4 & \text{for } k\equiv3\pmod{4}.
\end{cases}
\]
\end{theorem}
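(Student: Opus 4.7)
The plan is to prove the theorem in two directions.

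\textbf{Sufficiency.} For each $(k,n)$ satisfying the stated conditions I would exhibit an explicit nontrivial solution. First, I would produce one of minimal size in each residue class:
\[
\tfrac12+\tfrac14+\tfrac{k}{4}\cdot\tfrac1k=1 \qquad (k\equiv 0\pmod 4),
\]
\[
\tfrac12+\tfrac14+\tfrac{k-1}{4}\cdot\tfrac1k+\tfrac{1}{4k}=1 \qquad (k\equiv 1\pmod 4),
\]
\[
\tfrac12+\tfrac14+\tfrac{k-2}{4}\cdot\tfrac1k+\tfrac{1}{2k}=1 \qquad (k\equiv 2\pmod 4),
\]
\[
\tfrac12+\tfrac14+\tfrac{k-3}{4}\cdot\tfrac1k+\tfrac{1}{2k}+\tfrac{1}{4k}=1 \qquad (k\equiv 3\pmod 4,\ k>3),
\]
together with $\tfrac12+\tfrac13+\tfrac16=1$ for $(k,n)=(3,3)$. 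Direct inspection shows that each of these has exactly $n_{\min}(k)$ terms. For larger $n$, I would argue inductively, using expansion under Move~\ref{move:1} to increase the term count by~$1$ whenever the current nontrivial solution has an entry in column~$0$ or column~$1$. In the remaining case (every entry in column~$2$), I would apply an expansion under Move~\ref{move:2}, \ref{move:3}, \ref{move:4}, or~\ref{move:5} to break open a column-$2$ entry into row $b+1$; the resulting solution has entries in column~$0$ or~$1$, and further Move~\ref{move:1} expansions can then fill in intermediate values of~$n$.

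\textbf{Necessity.} Let $A$ be the array of any nontrivial solution with top nonzero row $\beta\geq 1$. Lemma~\ref{lem:admissible}(\ref{part:nontrivialadmissible}) gives $4c_{0,\beta}+2c_{1,\beta}+c_{2,\beta}=mk$ for some positive integer~$m$. For fixed~$m$, the number of terms in row~$\beta$ is minimized by placing as much weight as possible in column~$0$ subject to the mod-$4$ constraint on $2c_{1,\beta}+c_{2,\beta}$. Row~$\beta$ contributes $m/(4k^{\beta-1})$ to the unit sum, so the rows below must contribute $1-m/(4k^{\beta-1})$, and a parallel admissibility argument in row~$0$ lower-bounds the number of those terms. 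A finite case check over $(m,\beta,k\bmod 4)$ then shows that the minimum total is achieved at $\beta=1$, $m=1$ (matching the minimal constructions above), except for $k=3$, where $\beta=1$ with $m=2$ and $(c_{0,1},c_{1,1},c_{2,1})=(1,1,0)$ produces the 3-term solution $\tfrac12+\tfrac13+\tfrac16$.

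The main obstacle is ensuring that the induction in the sufficiency step reaches \emph{every} $n\geq n_{\min}(k)$: since Moves~\ref{move:2}--\ref{move:5} change the term count by residue-dependent amounts typically exceeding~$1$, one must interleave them with Move~\ref{move:1} judiciously so that no value of~$n$ is skipped. A useful tactic is to back up to an earlier solution with fewer terms and take a different expansion path, rather than always extending from the immediately preceding size. A secondary subtlety is handling the $\beta\geq 2$ case in the necessity proof and verifying that it never beats $\beta=1$ (aside from the $k=3$ exception).
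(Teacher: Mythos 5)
Your overall strategy is the same as the paper's: Proposition~\ref{prop:sufficient} likewise exhibits explicit arrays (your four minimal solutions are precisely the paper's constructions with a single stacked row, i.e.\ $s=1$, $t=0$), and Proposition~\ref{prop:necessary} likewise combines the divisibility constraint of Lemma~\ref{lem:admissible}(\ref{part:nontrivialadmissible}) on the top nonzero row with a lower bound on the terms below it. However, the two steps you defer are where the real work lies, and as stated they are genuine gaps. For sufficiency, you never verify that interleaving Move~\ref{move:1} expansions with Moves~\ref{move:2}--\ref{move:5} reaches \emph{every} $n\geq n_{\min}(k)$. This requires checking, in each residue class, that the number of Move~\ref{move:1} expansions available at a given level is at least one less than the jump in term count produced by the relevant higher move. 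The paper avoids the induction entirely by writing down, for every admissible $n$, a closed-form array with $s+1$ rows whose top row is perturbed by a parameter $t$ ranging over a complete residue system modulo the jump size (e.g.\ $t\in\{0,\dotsc,k/4\}$ against a jump of $k/4+1$ when $k\equiv0\pmod4$); you should either prove the overlap or adopt such a closed form.

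The more serious gap is in necessity for $\beta\geq2$: the rows below row~$\beta$ sum to $1-m/(4k^{\beta-1})$, not to~$1$, so a ``parallel admissibility argument in row~$0$'' (Lemma~\ref{lem:admissible}(\ref{part:trivialpossibilities})) does not apply to them, and your case check over $(m,\beta,k\bmod4)$ is not finite in $\beta$ without a further idea. The paper closes this with a separate statement, Lemma~\ref{lem:nontrivial3terms}, that every nontrivial solution has at least~$3$ terms; applying it to the array obtained after one reduction of row~$\beta$ (still nontrivial when $\beta\geq2$) shows that rows $\beta-1$ and below of the original array carry at least $3-r$ terms, where $r\in\{1,2\}$ is the number of terms that reduction deposits in row~$\beta-1$. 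That lemma needs its own case analysis (via Lemmas \ref{lem:onereduction} and~\ref{lem:numberofterms}), so it is a missing ingredient rather than a routine remark. Finally, your case check should be organized by the pattern $(c_{1,\beta},c_{2,\beta})\in\{0,1\}^2$ rather than by $m$ alone: the minimal admissible $m$ is not always $1$ (it is $4$ for the pattern $(0,0)$ with $k$ odd and $3$ for $(0,1)$ with $k\equiv3\pmod4$), and the lower bound on the terms below row~$\beta$ depends on which move reduces the top row, not just on $m$ --- this dependence is exactly how the $(k,n)=(3,3)$ exception, coming from $m=2$ and pattern $(1,0)$, escapes the general bound.
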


We prove Theorem~\ref{thm:characterization} by proving Propositions \ref{prop:sufficient} and~\ref{prop:necessary}.

\begin{proposition}\label{prop:sufficient}
Let $k$ be a positive integer that is not a power of~$2$. If $(k,n)=(3,3)$ or
\[
n\geq\begin{cases}
(k+8)/4 & \text{for } k\equiv0\pmod{4},\\
(k+11)/4 & \text{for } k\equiv1\pmod{4},\\
(k+10)/4 & \text{for } k\equiv2\pmod{4},\\
(k+13)/4 & \text{for } k\equiv3\pmod{4},
\end{cases}
\]
then~\eqref{eq:main} has a nontrivial solution in integers in $S(k)$.
\end{proposition}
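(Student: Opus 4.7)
The plan is to proceed by induction on $n$, with base cases given by explicit identities and an inductive step that increases the term count by exactly one using a single (or composite) expansion move.

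For the base case $n = n_{\min}(k)$, I would start from $1 = \frac{1}{2} + \frac{1}{4} + \frac{1}{4}$ and replace the final $\frac{1}{4}$ using the relation encoded by Move~\ref{move:2}, \ref{move:3}, \ref{move:4}, or \ref{move:5} with $b = 0$, chosen according to $k \bmod 4$. Concretely: $1 = \frac{1}{2} + \frac{1}{4} + (k/4)\cdot \frac{1}{k}$ for $k \equiv 0 \pmod 4$; $1 = \frac{1}{2} + \frac{1}{4} + ((k-1)/4)\cdot \frac{1}{k} + \frac{1}{4k}$ for $k \equiv 1 \pmod 4$; $1 = \frac{1}{2} + \frac{1}{4} + ((k-2)/4)\cdot \frac{1}{k} + \frac{1}{2k}$ for $k \equiv 2 \pmod 4$; and $1 = \frac{1}{2} + \frac{1}{4} + ((k-3)/4)\cdot \frac{1}{k} + \frac{1}{2k} + \frac{1}{4k}$ for $k \equiv 3 \pmod 4$. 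Counting terms yields $(k+8)/4$, $(k+11)/4$, $(k+10)/4$, $(k+13)/4$, respectively, and each solution is manifestly nontrivial. The special base case $(k, n) = (3, 3)$ is handled by $\frac{1}{2} + \frac{1}{3} + \frac{1}{6} = 1$.

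For the inductive step, suppose $A$ is the array of a nontrivial solution with $n$ terms, and let $\beta \geq 1$ be its top nonzero row. If some entry of $A$ in column $0$ or column $1$ is nonzero, I apply Move~\ref{move:1} in the expansion direction to this entry, producing a nontrivial solution with $n + 1$ terms. Otherwise every nonzero entry of $A$ lies in column $2$, so $c_{0,\beta} = c_{1,\beta} = 0$ and Lemma~\ref{lem:admissible}(\ref{part:nontrivialadmissible}) forces $k \mid c_{2,\beta}$, giving $c_{2,\beta} \geq k$. In this subcase the plan is to perform $r$ applications of Move~\ref{move:1} in the reduction direction on $c_{2,\beta}$ (each lowering the term count by $1$) followed by a single expansion move of type Move~\ref{move:2}--\ref{move:5} targeting row $\beta + 1$, chosen so that the expansion adds exactly $r + 1$ terms. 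For the four residue classes I would take $r = k/4 - 2$ with Move~\ref{move:2}, $r = (k-5)/4$ with Move~\ref{move:3}, $r = (k-6)/4$ with Move~\ref{move:4}, and $r = (k-3)/4$ with Move~\ref{move:5}, respectively; in each case $r \geq 0$ and $c_{2,\beta} \geq k \geq 2r + 1$, so the sequence of moves is applicable, and the resulting array still has a nonzero entry above row $0$ (a column-$0$ entry in row $\beta + 1$), hence remains nontrivial.

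The main obstacle is this second subcase of the inductive step: when every nonzero entry is in column $2$, no single expansion move adds exactly one term, so one must engineer the net gain by mixing reductions with a single larger expansion. The key arithmetic check is that in each residue class, the number of terms added by the chosen expansion move minus the preliminary reduction count $r$ equals exactly $1$, and that the available supply $c_{2,\beta} \geq k$ is large enough to perform the required reductions.
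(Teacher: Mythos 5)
Your proof is correct, but it takes a genuinely different route from the paper's. The paper gives, for each residue class of $k$ modulo~$4$, a single explicit closed-form array: it writes $n-(\text{constant})$ as $s\cdot(\text{block size})+t$, stacks $s$ identical middle rows beneath a top row adjusted by $t$ applications of Move~\ref{move:1}, and verifies the result is a solution by exhibiting its expansion sequence from the trivial array. You instead induct on~$n$: your base cases are precisely the smallest members of those families, and your inductive step shows that any nontrivial solution with $n$ terms yields one with $n+1$ terms --- either by a single Move~\ref{move:1} expansion when some entry outside column~$2$ is nonzero, or, in the all-column-$2$ case, by trading $r$ Move~\ref{move:1} reductions against one Move~\ref{move:2}--\ref{move:5} expansion with the term counts arranged to net $+1$, where the divisibility statement of Lemma~\ref{lem:admissible}(\ref{part:nontrivialadmissible}) supplies $c_{2,\beta}\geq k\geq 2r+1$ so the composite move is applicable. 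I checked the arithmetic in all four residue classes and it works; two harmless imprecisions are that for $k=3,5,6$ one has $r=0$ (so a single expansion move does add exactly one term there), and that for $k=3$ the nonzero entries created in row $\beta+1$ sit in columns $1$ and~$2$ rather than column~$0$, which still preserves nontriviality. What the paper's approach buys is an explicit solution for every pair $(k,n)$, as advertised in its introduction; what yours buys is a shorter verification (four identities plus one increment step), at the cost of producing the solutions only recursively.
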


\begin{proof}
For each pair $(k,n)$ satisfying these conditions, we construct an array corresponding to a nontrivial solution of~\eqref{eq:main} in integers in $S(k)$. If $(k,n)=(3,3)$, we have the array 
\[
\renewcommand{\arraystretch}{1.3}\begin{NiceArray}{ccc}[hvlines,corners]
1 & 1 & 0\\
0 & 1 & 0
\end{NiceArray}\,.
\]
This array can be obtained from $\trivialarray$ via an expansion under Move~\ref{move:1} followed by an expansion under Move~\ref{move:4}, so hence it represents a solution. 

If $n\geq(k+8)/4$ and $k\equiv0\pmod{4}$, let
\[
s=\floor*{\frac{n-1}{k/4+1}}
\]
and 
\[
t=n-1-s(k/4+1).
\]
The assumption $n\geq(k+8)/4$ means that $s$ is at least~$1$. Also, $t$ is the residue of $n-1$ modulo $k/4+1$, so $0\leq t\leq k/4$. We have the array
\[
\renewcommand{\arraystretch}{1.3}
\begin{NiceArray}{ccc}[hvlines,corners]
\frac{k}{4}-t & 2t & 0\\
\frac{k}{4}-1 & 1 & 1\\
\vdots & \vdots & \vdots\\
\frac{k}{4}-1 & 1 & 1\\
0 & 1 & 1
\end{NiceArray}\,,
\]
where the number of rows is $s+1$. The number of terms is thus
\[
s(k/4+1)+t+1=n.
\]
This array can be obtained from $\trivialarray$ via $s$~sets of two expansions under Move~\ref{move:1} followed by one expansion under Move~\ref{move:2}, all followed by $t$~expansions under Move~\ref{move:1}. Therefore it represents a solution.

If $n\geq(k+11)/4$ and $k\equiv1\pmod{4}$, let
\[
s=\floor*{\frac{n-3}{(k-1)/4}}
\]
and 
\[
t=n-3-s\biggl(\frac{k-1}{4}\biggr).
\]
The assumption $n\geq(k+11)/4$ means that $s$ is at least~$1$. Also, $t$ is the residue of $n-3$ modulo $(k-1)/4$, so $0\leq t\leq (k-5)/4$. We have the array
\[
\renewcommand{\arraystretch}{1.3}
\begin{NiceArray}{ccc}[hvlines,corners]
\frac{k-1}{4}-t & 2t & 1\\
\frac{k-1}{4} & 0 & 0\\
\vdots & \vdots & \vdots\\
\frac{k-1}{4} & 0 & 0\\
0 & 1 & 1
\end{NiceArray}\,,
\]
where the number of rows is $s+1$. The number of terms is thus
\[
s\biggl(\frac{k-1}{4}\biggr)+t+3=n.
\]
This array can be obtained from $\trivialarray$ via two expansions under Move~\ref{move:1} followed by $s$~expansions under Move~\ref{move:3} followed by $t$~expansions under Move~\ref{move:1}, so therefore it represents a solution.

If $n\geq(k+10)/4$ and $k\equiv2\pmod{4}$, let
\[
s=\floor*{\frac{n-2}{(k+2)/4}}
\]
and 
\[
t=n-2-s\biggl(\frac{k+2}{4}\biggr).
\]
The assumption $n\geq(k+10)/4$ means that $s$ is at least~$1$. Also, $t$ is the residue of $n-2$ modulo $(k+2)/4$, so $0\leq t\leq (k-2)/4$. We have the array
\[
\renewcommand{\arraystretch}{1.3}
\begin{NiceArray}{ccc}[hvlines,corners]
\frac{k-2}{4}-t & 1+2t & 0\\
\frac{k-2}{4} & 0 & 1\\
\vdots & \vdots & \vdots\\
\frac{k-2}{4} & 0 & 1\\
0 & 1 & 1
\end{NiceArray}\,,
\]
where the number of rows is $s+1$. The number of terms is thus
\[
s\biggl(\frac{k+2}{4}\biggr)+t+2=n.
\]
This array can be obtained from $\trivialarray$ via one expansion under Move~\ref{move:1} followed by $s$~sets of one expansion under Move~\ref{move:1} followed by one expansion under Move~\ref{move:4}, all followed by $t$~expansions under Move~\ref{move:1}. Therefore it represents a solution.

If $n\geq(k+13)/4$ and $k\equiv3\pmod{4}$, let 
\[
s=\floor*{\frac{n-3}{(k+1)/4}}
\]
and 
\[
t=n-3-s\biggl(\frac{k+1}{4}\biggr).
\]
The assumption $n\geq(k+13)/4$ means that $s$ is at least~$1$. Also, $t$ is the residue of $n-3$ modulo $(k+1)/4$, so $0\leq t\leq (k-3)/4$. We have the array
\[
\renewcommand{\arraystretch}{1.3}
\begin{NiceArray}{ccc}[hvlines,corners]
\frac{k-3}{4}-t & 1+2t & 1\\
\frac{k-3}{4} & 1 & 0\\
\vdots & \vdots & \vdots\\
\frac{k-3}{4} & 1 & 0\\
0 & 1 & 1
\end{NiceArray}\,,
\]
where the number of rows is $s+1$. The number of terms is thus
\[
s\biggl(\frac{k+1}{4}\biggr)+t+3=n.
\] 
This array can be obtained from $\trivialarray$ via two expansions under Move~\ref{move:1} followed by $s$~expansions under Move~\ref{move:5} followed by $t$~expansions under Move~\ref{move:1}, so therefore it represents a solution.
\end{proof}

Before proving the converse of Proposition~\ref{prop:sufficient}, we establish several lemmas. 

\begin{lemma}\label{lem:numberofterms}
Let $k$ be a positive integer that is not a power of~$2$, let $A$ be an array corresponding to a solution of~\eqref{eq:main} with each $x_i\in S(k)$, and let $\beta$ be the index of the top nonzero row of~$A$.
\begin{enumerate}[label=\textup{(\alph*)},ref=\textup{\alph*}]
\item If $\beta=0$ and $A$ is of the form 
\,$\vspacebeforeline{2pt}\renewcommand{\arraystretch}{1.3}
\begin{NiceArray}{ccc}[hvlines,corners]
c_{0,0} & c_{1,0} & c_{2,0}+1
\end{NiceArray}\vspace{3pt}$\,
with each $c_{i,0}\geq0$, then $c_{0,0}+c_{1,0}+c_{2,0}\geq2$.\label{part:numberoftermstrivial1}
\item If $\beta=0$ and $A$ is of the form 
\,$\vspacebeforeline{2pt}\renewcommand{\arraystretch}{1.3}
\begin{NiceArray}{ccc}[hvlines,corners]
c_{0,0} & c_{1,0}+1 & c_{2,0}
\end{NiceArray}\vspace{3pt}$\,
with each $c_{i,0}\geq0$, then $c_{0,0}+c_{1,0}+c_{2,0}\geq1$.\label{part:numberoftermstrivial2}
\item If $\beta\geq1$ and row~$\beta$ of~$A$ is of the form 
\,$\vspacebeforeline{2pt}\renewcommand{\arraystretch}{1.3}
\begin{NiceArray}{ccc}[hvlines,corners]
c_{0,\beta} & c_{1,\beta} & c_{2,\beta}+1
\end{NiceArray}\vspace{3pt}$\,
with each $c_{i,\beta}\geq0$, then $c_{0,\beta}+c_{1,\beta}+c_{2,\beta}\geq1$.\label{part:numberoftermsnontrivial1}
\item If $\beta\geq1$ and row~$\beta$ of~$A$ is of the form 
\,$\vspacebeforeline{2pt}\renewcommand{\arraystretch}{1.3}
\begin{NiceArray}{ccc}[hvlines,corners]
c_{0,\beta} & c_{1,\beta}+1 & c_{2,\beta}
\end{NiceArray}\vspace{3pt}$\,
with each $c_{i,\beta}\geq0$, then $c_{0,\beta}+c_{1,\beta}+c_{2,\beta}\geq1$.\label{part:numberoftermsnontrivial2}
\end{enumerate}
\end{lemma}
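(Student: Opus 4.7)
The plan is to prove each of the four parts directly from Lemma~\ref{lem:admissible}. Parts~(\ref{part:numberoftermstrivial1}) and~(\ref{part:numberoftermstrivial2}) are finite case checks, while parts~(\ref{part:numberoftermsnontrivial1}) and~(\ref{part:numberoftermsnontrivial2}) reduce to a divisibility observation together with the hypothesis that $k$ is not a power of~$2$.

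For parts~(\ref{part:numberoftermstrivial1}) and~(\ref{part:numberoftermstrivial2}), I will simply enumerate the four arrays allowed by Lemma~\ref{lem:admissible}(\ref{part:trivialpossibilities}). For~(\ref{part:numberoftermstrivial1}), the hypothesis $c_{2,0}+1\geq1$ selects exactly the arrays
\,$\vspacebeforeline{2pt}\renewcommand{\arraystretch}{1.3}
\begin{NiceArray}{ccc}[hvlines,corners]
0 & 1 & 2
\end{NiceArray}\vspace{3pt}$\,
and
\,$\vspacebeforeline{2pt}\renewcommand{\arraystretch}{1.3}
\begin{NiceArray}{ccc}[hvlines,corners]
0 & 0 & 4
\end{NiceArray}\vspace{3pt}$\,, which give $c_{0,0}+c_{1,0}+c_{2,0}$ equal to~$2$ and~$3$, respectively. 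For~(\ref{part:numberoftermstrivial2}), the hypothesis $c_{1,0}+1\geq1$ selects the arrays
\,$\vspacebeforeline{2pt}\renewcommand{\arraystretch}{1.3}
\begin{NiceArray}{ccc}[hvlines,corners]
0 & 2 & 0
\end{NiceArray}\vspace{3pt}$\,
and
\,$\vspacebeforeline{2pt}\renewcommand{\arraystretch}{1.3}
\begin{NiceArray}{ccc}[hvlines,corners]
0 & 1 & 2
\end{NiceArray}\vspace{3pt}$\,, which give $c_{0,0}+c_{1,0}+c_{2,0}$ equal to~$1$ and~$2$, respectively. In both parts the desired inequality holds.

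For parts~(\ref{part:numberoftermsnontrivial1}) and~(\ref{part:numberoftermsnontrivial2}), Lemma~\ref{lem:admissible}(\ref{part:nontrivialadmissible}) applies since $\beta\geq1$. For~(\ref{part:numberoftermsnontrivial1}), the top nonzero row entries are $c_{0,\beta},\,c_{1,\beta},\,c_{2,\beta}+1$, so $k$ divides $4c_{0,\beta}+2c_{1,\beta}+c_{2,\beta}+1$. Assuming for contradiction that $c_{0,\beta}+c_{1,\beta}+c_{2,\beta}=0$ forces $c_{0,\beta}=c_{1,\beta}=c_{2,\beta}=0$, and then $k\mid1$, so $k=1=2^0$, contradicting the hypothesis that $k$ is not a power of~$2$. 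For~(\ref{part:numberoftermsnontrivial2}), the top row entries are $c_{0,\beta},\,c_{1,\beta}+1,\,c_{2,\beta}$, so $k$ divides $4c_{0,\beta}+2c_{1,\beta}+c_{2,\beta}+2$; the same assumption leads to $k\mid2$, so $k\in\{1,2\}$, again contradicting the hypothesis.

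There is no real obstacle here; the content of the lemma is essentially that the only way for the designated row to consist solely of the prescribed ``$+1$'' entry is ruled out by Lemma~\ref{lem:admissible}. The mild subtlety is remembering that $k=1$ counts as a power of~$2$, which is needed in part~(\ref{part:numberoftermsnontrivial1}); once that is noted, the argument is immediate in every case.
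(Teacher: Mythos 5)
Your proof is correct and follows essentially the same route as the paper: parts (\ref{part:numberoftermstrivial1}) and (\ref{part:numberoftermstrivial2}) by enumerating the arrays allowed by Lemma~\ref{lem:admissible}(\ref{part:trivialpossibilities}), and parts (\ref{part:numberoftermsnontrivial1}) and (\ref{part:numberoftermsnontrivial2}) by applying Lemma~\ref{lem:admissible}(\ref{part:nontrivialadmissible}) and observing that the all-zero case would force $k\mid1$ or $k\mid2$, which the hypothesis excludes. The paper phrases the last step as ``$k$ cannot be $1$ or $2$'' rather than as a contradiction, but the content is identical.
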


\begin{proof}
If a solution corresponds to an array of the form
\,$\vspacebeforeline{2pt}\renewcommand{\arraystretch}{1.3}
\begin{NiceArray}{ccc}[hvlines,corners]
c_{0,0} & c_{1,0} & c_{2,0}+1
\end{NiceArray}\vspace{3pt}$\,
with each $c_{i,0}\geq0$, Lemma~\ref{lem:admissible}(\ref{part:trivialpossibilities}) gives that either $c_{0,0}=0, c_{1,0}=c_{2,0}=1$ or $c_{0,0}=c_{1,0}=0,\allowbreak c_{2,0}=3$. In both cases, $c_{0,0}+c_{1,0}+c_{2,0}\geq2$.

If a solution corresponds to an array of the form
\,$\vspacebeforeline{2pt}\renewcommand{\arraystretch}{1.3}
\begin{NiceArray}{ccc}[hvlines,corners]
c_{0,0} & c_{1,0}+1 & c_{2,0}
\end{NiceArray}\vspace{3pt}$\,
with each $c_{i,0}\geq0$, Lemma~\ref{lem:admissible}(\ref{part:trivialpossibilities}) gives that either $c_{0,0}=c_{2,0}=0, c_{1,0}=1$ or $c_{0,0}=c_{1,0}=0,\allowbreak c_{2,0}=2$. In both cases, $c_{0,0}+c_{1,0}+c_{2,0}\geq1$.

If a nontrivial solution corresponds to an array with top nonzero row of the form
\,$\vspacebeforeline{2pt}\renewcommand{\arraystretch}{1.3}
\begin{NiceArray}{ccc}[hvlines,corners]
c_{0,\beta} & c_{1,\beta} & c_{2,\beta}+1
\end{NiceArray}\vspace{3pt}$\,
with each $c_{i,\beta}\geq0$, Lemma~\ref{lem:admissible}(\ref{part:nontrivialadmissible}) gives that $k$ divides $4c_{0,\beta}+2c_{1,\beta}+c_{2,\beta}+1$. Since $k$ cannot be~$1$, this is only possible if $c_{0,\beta}+c_{1,\beta}+c_{2,\beta}\geq1$.

If a nontrivial solution corresponds to an array with top nonzero row of the form
\,$\vspacebeforeline{2pt}\renewcommand{\arraystretch}{1.3}
\begin{NiceArray}{ccc}[hvlines,corners]
c_{0,\beta} & c_{1,\beta}+1 & c_{2,\beta}
\end{NiceArray}\vspace{3pt}$\,
with each $c_{i,\beta}\geq0$, Lemma~\ref{lem:admissible}(\ref{part:nontrivialadmissible}) gives that $k$ divides $4c_{0,\beta}+2(c_{1,\beta}+1)+c_{2,\beta}$. Since $k$ cannot be $1$ or~$2$, this is only possible if $c_{0,\beta}+c_{1,\beta}+c_{2,\beta}\geq1$.
\end{proof}

The next lemma allows us to restrict attention to arrays whose top nonzero row can become zero under a single reduction move.

\enlargethispage*{\baselineskip}

\begin{lemma}\label{lem:onereduction}
Let $k$ be a positive integer that is not a power of~$2$, let $A$ be an array corresponding to a nontrivial solution of~\eqref{eq:main} in integers in $S(k)$, and let $\beta$ be the index of the top nonzero row of~$A$. There is a solution of~\eqref{eq:main} with each $x_i\in S(k)$ that has no more terms than the solution corresponding to~$A$, whose corresponding array~$A'$ also has top nonzero row indexed by~$\beta$, and where row~$\beta$ of~$A'$ can become 
\,$\vspacebeforeline{2pt}\renewcommand{\arraystretch}{1.3}
\begin{NiceArray}{ccc}[hvlines,corners]
0 & 0 & 0
\end{NiceArray}\vspace{3pt}$\,
under a single application of one of Moves \ref{move:2}--\ref{move:5}.
\end{lemma}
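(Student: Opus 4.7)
The plan is to build $A'$ from $A$ via a two-phase sequence of forward moves. The first phase uses Move~\ref{move:1} inside row~$\beta$ to trim $c_{1,\beta}$ and $c_{2,\beta}$ down to values in $\{0,1\}$; the second phase uses Move~\ref{move:2} on rows $\beta-1$ and $\beta$ to trim $c_{0,\beta}$ down to its minimum admissible value for the resulting pair $(c_{1,\beta}, c_{2,\beta})$. At the end, row $\beta$ of $A'$ will coincide with the source pattern (with every $\bullet=0$) of whichever of Moves \ref{move:2}--\ref{move:5} applies, so a single reduction of that type zeroes out row~$\beta$.

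In Phase~1, repeatedly apply Move~\ref{move:1} forward within row~$\beta$---first to columns 1--2 whenever $c_{2,\beta}\geq 2$, and then to columns 0--1 whenever $c_{1,\beta}\geq 2$---until $c_{1,\beta},c_{2,\beta}\in\{0,1\}$. Each such step decreases the total number of terms by~$1$, changes only row~$\beta$, and leaves row~$\beta$ nonzero, since Lemma~\ref{lem:admissible}(\ref{part:nontrivialadmissible}) forces $4c_{0,\beta}+2c_{1,\beta}+c_{2,\beta}\geq k\geq 3$ at every intermediate step.

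For Phase~2, write $4c_{0,\beta}+2c_{1,\beta}+c_{2,\beta}=m'k$ with $m'$ the positive integer guaranteed by Lemma~\ref{lem:admissible}(\ref{part:nontrivialadmissible}), and let $m_{\min}$ denote its smallest admissible value for the fixed pair $(c_{1,\beta},c_{2,\beta})$, as identified in the proof of Proposition~\ref{prop:reductionmove}. A direct mod-$4$ calculation shows that admissible values of $m'$ are spaced by $1$, $2$, or $4$ according to whether $k\equiv 0$, $k\equiv 2$, or $k$ is odd modulo~$4$; and these are exactly the decrements in $m'$ caused by one forward application of Move~\ref{move:2} (which removes $k/4$, $k/2$, or $k$ entries from position $(0,\beta)$, respectively). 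Applying Move~\ref{move:2} the appropriate $(m'-m_{\min})/\mathrm{step}$ times therefore reduces $c_{0,\beta}$ exactly to $c_{0,\beta}^{\min}=(m_{\min}k-2c_{1,\beta}-c_{2,\beta})/4$ without disturbing $c_{1,\beta}$ or $c_{2,\beta}$, and each application strictly decreases the term count because $k$ is not a power of $2$. By construction, the resulting row~$\beta$ is exactly the source pattern of one of Moves \ref{move:2}--\ref{move:5}, so a single application of that move reduces row~$\beta$ to $(0,0,0)$.

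The main subtlety is verifying that this final row~$\beta$ really is nonzero (so it remains the top nonzero row of~$A'$) and that the source patterns of Moves \ref{move:2}--\ref{move:5} really are the $m'=m_{\min}$ configurations; both reduce to a short case check over the admissible combinations of $k\bmod 4$ with $(c_{1,\beta},c_{2,\beta})\in\{0,1\}^2$, the tightest case being $k=3$ with $(c_{1,\beta},c_{2,\beta})=(1,1)$, where row~$\beta$ ends up as $(0,1,1)$.
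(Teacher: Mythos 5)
Your argument is correct, but it takes a noticeably more constructive route than the paper's. The paper proves this lemma in two lines by invoking Proposition~\ref{prop:priority}: the priority reduction sequence from $A$ down to the one-term array must at some step zero out row~$\beta$; that step cannot be Move~\ref{move:1}, since a Move~\ref{move:1} reduction increases one entry of the row it edits; so truncating the sequence just before that step produces~$A'$. You instead build the reduction sequence by hand --- Move~\ref{move:1} within row~$\beta$ until $c_{1,\beta},c_{2,\beta}\in\{0,1\}$, then Move~\ref{move:2} until $m'$ reaches $m_{\min}$ --- and your two key observations (that $4c_{0,\beta}+2c_{1,\beta}+c_{2,\beta}$ is invariant under Move~\ref{move:1}, and that the decrement of $m'$ under one forward Move~\ref{move:2} equals the spacing of admissible values of $m'$, namely $1$, $2$, or $4$ according to $k\bmod 4$) do make the construction land exactly on the minimal source pattern identified in the proof of Proposition~\ref{prop:reductionmove}; applicability of Move~\ref{move:2} at each intermediate step holds because $m'>m_{\min}$ forces $c_{0,\beta}$ to exceed the amount removed. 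The cost of your approach is that you must redo the case analysis of Proposition~\ref{prop:reductionmove} and check these intermediate applicability and nonvanishing conditions, whereas the paper gets all of that for free from Proposition~\ref{prop:priority}. The benefit is a sharper conclusion: you show $A'$ is reachable using only Moves \ref{move:1} and~\ref{move:2} and you identify row~$\beta$ of~$A'$ explicitly as the minimal source pattern for the given pair $(c_{1,\beta},c_{2,\beta})$, rather than merely asserting that some suitable $A'$ occurs along the reduction sequence. Either version is a complete proof of the statement.
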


\begin{proof}
By Proposition~\ref{prop:priority}, we know $A$ can be reduced to $\trivialarray$ under a sequence of Moves \ref{move:1}--\ref{move:5}, each of which reduce the number of terms of the corresponding solution. Since the original solution is nontrivial, $\beta\geq1$, so at some step of this process row~$\beta$ becomes 
\,$\vspacebeforeline{2pt}\renewcommand{\arraystretch}{1.3}
\begin{NiceArray}{ccc}[hvlines,corners]
0 & 0 & 0
\end{NiceArray}\vspace{3pt}$\,.
This must happen after an application of one of Moves \ref{move:2}--\ref{move:5}. Performing reductions up to but not including this step, we obtain an array~$A'$ corresponding to a solution with no more terms than the original solution, where row~$\beta$ is the top nonzero row of~$A'$ and can become
\,$\vspacebeforeline{2pt}\renewcommand{\arraystretch}{1.3}
\begin{NiceArray}{ccc}[hvlines,corners]
0 & 0 & 0
\end{NiceArray}\vspace{3pt}$\,
under a single application of one of Moves \ref{move:2}--\ref{move:5}. 
\end{proof}

We use the preceding lemmas to show that every nontrivial solution has at least~$3$ terms.

\begin{lemma}\label{lem:nontrivial3terms}
Let $k$ be a positive integer that is not a power of~$2$. Every nontrivial solution of~\eqref{eq:main} in integers in $S(k)$ has at least~$3$ terms.
\end{lemma}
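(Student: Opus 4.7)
My plan is to prove the contrapositive directly: any solution of~\eqref{eq:main} with $n\leq 2$ must be trivial. Because the equation admits very few positive integer solutions for such small~$n$, a brief enumeration suffices and the structural machinery of Lemmas~\ref{lem:admissible} and~\ref{lem:onereduction} need not be invoked.

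For $n=1$, the only positive integer satisfying $1/x_1=1$ is $x_1=1=2^0k^0$, whose array has $c_{0,0}=1$ and is precisely $\trivialarray$. For $n=2$, I solve $1/x_1+1/x_2=1$ with $x_1\leq x_2$. We cannot have $x_1=1$ (which would force $1/x_2=0$) or $x_1\geq 3$ (which would force $1/x_1+1/x_2\leq 2/3$), so $x_1=2=2^1k^0$ and hence $x_2=2$. The corresponding array has $c_{1,0}=2$ and is trivial. Combining the two cases, every nontrivial solution must have at least three terms.

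There is no genuine obstacle here, but if one prefers a structural argument matching the paper's flow, the following works. Let $A$ correspond to a nontrivial solution with top nonzero row index $\beta\geq 1$. If row~$\beta$ is the only nonzero row of~$A$, then multiplying~\eqref{eq:main} by $4k^\beta$ gives $4c_{0,\beta}+2c_{1,\beta}+c_{2,\beta}=4k^\beta$, and the coarse bound $4c_{0,\beta}+2c_{1,\beta}+c_{2,\beta}\leq 4(c_{0,\beta}+c_{1,\beta}+c_{2,\beta})$ forces row~$\beta$ to contain at least $k^\beta\geq 3$ entries. Otherwise some row below~$\beta$ contributes at least one extra entry, and the only remaining threat is that row~$\beta$ contains a single entry; but then Lemma~\ref{lem:admissible}(b) would require $k$ to divide $4c_{0,\beta}+2c_{1,\beta}+c_{2,\beta}\in\{1,2,4\}$, forcing $k=3$, which is incompatible with the fact that none of $1,2,4$ is divisible by~$3$. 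Either route yields the conclusion.
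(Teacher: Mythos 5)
Your proof is correct, and both of your routes are genuinely different from the paper's. The paper proves this lemma structurally: it invokes Lemma~\ref{lem:onereduction} to reduce to arrays whose top nonzero row vanishes under a single application of one of Moves~\ref{move:2}--\ref{move:5}, then runs a case analysis over the residue of $k$ modulo~$4$ and the possible forms of that row, using Lemma~\ref{lem:admissible}(\ref{part:nontrivialadmissible}) and Lemma~\ref{lem:numberofterms} to finish each case. Your first argument bypasses all of this: the only positive-integer solutions of~\eqref{eq:main} with $n\leq2$ are $x_1=1$ and $x_1=x_2=2$, neither of which has a term divisible by $k\geq3$, so no nontrivial solution can have fewer than $3$ terms. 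This is shorter, needs none of the array machinery, and does not even use the restriction to $S(k)$. Your second argument is also valid and closer in spirit to the paper, though still leaner: the single-row case is handled by the coarse bound $4k^{\beta}=4c_{0,\beta}+2c_{1,\beta}+c_{2,\beta}\leq4(c_{0,\beta}+c_{1,\beta}+c_{2,\beta})$, and the multi-row case only needs Lemma~\ref{lem:admissible}(\ref{part:nontrivialadmissible}) to exclude a top row with a single entry (your phrasing ``forcing $k=3$'' is slightly off --- the point is simply that no $k\geq3$ divides any of $1$, $2$, $4$ --- but the conclusion stands). What the paper's heavier proof buys is a rehearsal of exactly the case-by-case template that is then reused, nearly verbatim, in the proof of Proposition~\ref{prop:necessary}; as a standalone proof of this lemma, your version is preferable for its economy.
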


\begin{proof}
By Lemma~\ref{lem:onereduction}, it suffices to consider solutions corresponding to arrays whose top nonzero row can be reduced to 
\,$\vspacebeforeline{2pt}\renewcommand{\arraystretch}{1.3}
\begin{NiceArray}{ccc}[hvlines,corners]
0 & 0 & 0
\end{NiceArray}\vspace{3pt}$\,
under a single application of one of Moves \ref{move:2}--\ref{move:5}. Let $\beta$ be the index of the top nonzero row of such an array. The conclusion is satisfied if the entries of row~$\beta$ sum to at least~$3$, so it suffices to consider the cases where the entries of row~$\beta$ sum to at most~$2$. In these cases, we use Lemma~\ref{lem:admissible}(\ref{part:nontrivialadmissible}) to restrict the possibilities. There are no possibilities when $k\equiv0\pmod{4}$. If $k\equiv1\pmod{4}$, the only possibility is if $k=5$ and row~$\beta$ is
\,$\vspacebeforeline{2pt}\renewcommand{\arraystretch}{1.3}
\begin{NiceArray}{ccc}[hvlines,corners]
1 & 0 & 1
\end{NiceArray}\vspace{3pt}$\,.
After applying Move~\ref{move:3}, the top nonzero row is 
\,$\vspacebeforeline{2pt}\renewcommand{\arraystretch}{1.3}
\begin{NiceArray}{ccc}[hvlines,corners]
c_{0,\beta-1} & c_{1,\beta-1} & c_{2,\beta-1}+1
\end{NiceArray}\vspace{3pt}$\,.
By Lemma \ref{lem:numberofterms}(\ref{part:numberoftermstrivial1},\ref{part:numberoftermsnontrivial1}), we have $c_{0,\beta-1}+c_{1,\beta-1}+c_{2,\beta-1}\geq1$, meaning the original solution has at least~$3$ terms. If $k\equiv2\pmod{4}$, the only possibility is if $k=6$ and row~$\beta$ is
\,$\vspacebeforeline{2pt}\renewcommand{\arraystretch}{1.3}
\begin{NiceArray}{ccc}[hvlines,corners]
1 & 1 & 0
\end{NiceArray}\vspace{3pt}$\,.
After applying Move~\ref{move:4}, the top nonzero row is 
\,$\vspacebeforeline{2pt}\renewcommand{\arraystretch}{1.3}
\begin{NiceArray}{ccc}[hvlines,corners]
c_{0,\beta-1} & c_{1,\beta-1} & c_{2,\beta-1}+1
\end{NiceArray}\vspace{3pt}$\,.
By Lemma \ref{lem:numberofterms}(\ref{part:numberoftermstrivial1},\ref{part:numberoftermsnontrivial1}), we have $c_{0,\beta-1}+c_{1,\beta-1}+c_{2,\beta-1}\geq1$, meaning the original solution has at least~$3$ terms. If $k\equiv3\pmod{4}$, the only possibilities are if $k=3$ and row~$\beta$ is either
\,$\vspacebeforeline{2pt}\renewcommand{\arraystretch}{1.3}
\begin{NiceArray}{ccc}[hvlines,corners]
1 & 1 & 0
\end{NiceArray}\vspace{3pt}$\,
or
\,$\vspacebeforeline{2pt}\renewcommand{\arraystretch}{1.3}
\begin{NiceArray}{ccc}[hvlines,corners]
0 & 1 & 1
\end{NiceArray}\vspace{3pt}$\,.
In the first of these cases, after applying Move~\ref{move:4}, the top nonzero row is
\,$\vspacebeforeline{2pt}\renewcommand{\arraystretch}{1.3}
\begin{NiceArray}{ccc}[hvlines,corners]
c_{0,\beta-1} & c_{1,\beta-1}+1 & c_{2,\beta-1}
\end{NiceArray}\vspace{3pt}$\,.
By Lemma \ref{lem:numberofterms}(\ref{part:numberoftermstrivial2},\ref{part:numberoftermsnontrivial2}), we have $c_{0,\beta-1}+c_{1,\beta-1}+c_{2,\beta-1}\geq1$, meaning the original solution has at least~$3$ terms. In the second case, after applying Move~\ref{move:5}, the top nonzero row is
\,$\vspacebeforeline{2pt}\renewcommand{\arraystretch}{1.3}
\begin{NiceArray}{ccc}[hvlines,corners]
c_{0,\beta-1} & c_{1,\beta-1} & c_{2,\beta-1}+1
\end{NiceArray}\vspace{3pt}$\,.
By Lemma \ref{lem:numberofterms}(\ref{part:numberoftermstrivial1},\ref{part:numberoftermsnontrivial1}), we have $c_{0,\beta-1}+c_{1,\beta-1}+c_{2,\beta-1}\geq1$, meaning the original solution has at least~$3$ terms.
\end{proof}

We now prove the converse of Proposition~\ref{prop:sufficient}.

\begin{proposition}\label{prop:necessary}
Let $k$ be a positive integer that is not a power of~$2$. If~\eqref{eq:main} has a nontrivial solution in integers in $S(k)$, then $(k,n)=(3,3)$ or
\[
n\geq\begin{cases}
(k+8)/4 & \text{for } k\equiv0\pmod{4},\\
(k+11)/4 & \text{for } k\equiv1\pmod{4},\\
(k+10)/4 & \text{for } k\equiv2\pmod{4},\\
(k+13)/4 & \text{for } k\equiv3\pmod{4}.
\end{cases}
\]
\end{proposition}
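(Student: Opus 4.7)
The approach is to bound $n$ from below by analyzing a minimum-term nontrivial solution. Let $A$ be an array corresponding to such a solution with $n$ terms, and let $\beta$ be the index of its top nonzero row. First I apply Lemma~\ref{lem:onereduction} to replace $A$ by an array with no more terms whose row~$\beta$ can be eliminated by a single application of one of Moves~\ref{move:2}--\ref{move:5}. Because each of these moves strictly decreases the number of terms when applied in the forward direction, the case $\beta \geq 2$ would produce a nontrivial solution with strictly fewer terms than~$A$, contradicting minimality. Hence $\beta = 1$, and $n$ equals the sum of entries in rows~$0$ and~$1$ of~$A$, which I denote $r_0(A)$ and $r_1(A)$.

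Next, I would case-split on $k \pmod{4}$ and, within each residue class, on the specific move used to eliminate row~$1$. Each applicable move determines the entries of row~$1$ completely (and hence $r_1(A)$) and, upon reduction, increments one or two specified entries of row~$0$ by~$1$; the resulting row~$0$ must then match one of the four shapes $(1,0,0)$, $(0,2,0)$, $(0,1,2)$, $(0,0,4)$ listed in Lemma~\ref{lem:admissible}(\ref{part:trivialpossibilities}). Matching the incremented row~$0$ against these four shapes, and discarding those that would force negative entries in the original row~$0$ of~$A$, determines the minimum possible value of $r_0(A)$ for that move, and hence the minimum of~$n$.

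Carrying out the enumeration: for $k \equiv 0 \pmod{4}$, only Move~\ref{move:2} applies and yields $n \geq k/4 + 2 = (k+8)/4$; for $k \equiv 1 \pmod{4}$, the minimum across Moves~\ref{move:2}, \ref{move:3}, \ref{move:4} is achieved by Move~\ref{move:3}, giving $n \geq (k-1)/4 + 1 + 2 = (k+11)/4$; for $k \equiv 2 \pmod{4}$, the minimum across Moves~\ref{move:2}, \ref{move:4} is achieved by Move~\ref{move:4}, giving $n \geq (k-2)/4 + 1 + 2 = (k+10)/4$; and for $k \equiv 3 \pmod{4}$ with $k \geq 7$, the minimum across Moves~\ref{move:2}--\ref{move:5} is achieved by Move~\ref{move:5}, giving $n \geq (k-3)/4 + 2 + 2 = (k+13)/4$. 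For each non-tight move, one checks that its row~$1$ contribution together with its forced minimum~$r_0$ exceeds the stated bound (e.g.\ for $k \equiv 1 \pmod{4}$, Move~\ref{move:2} gives $n \geq k$ and Move~\ref{move:4} gives $n \geq (k+3)/2$, both of which dominate $(k+11)/4$).

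The main subtlety is the exceptional case $k = 3$: Move~\ref{move:2} applied to row~$1 = (3, 0, 0)$ reduces to trivial row~$0 = (1, 0, 0)$, giving $r_0(A) = 0$ and $n = 3$, and Move~\ref{move:4} applied to row~$1 = (1, 1, 0)$ reduces to trivial row~$0 = (0, 2, 0)$, giving $r_0(A) = 1$ and $n = 3$; these realize exactly the $(3,3)$ exception in the theorem statement. For $k \equiv 3 \pmod{4}$ with $k \geq 7$, the Moves~\ref{move:2} and~\ref{move:4} contributions rise to $n \geq k$ and $n \geq (k+3)/2$ respectively, both of which exceed $(k+13)/4$, so the bound holds uniformly without further exceptions.
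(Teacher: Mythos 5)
Your overall strategy is sound and close to the paper's: both arguments reduce via Lemma~\ref{lem:onereduction} to an array whose top nonzero row is killed by a single application of one of Moves \ref{move:2}--\ref{move:5}, then enumerate the possible top rows (which are completely determined by the zeroing move) and bound the contribution of the row below by matching the incremented row against the four shapes of Lemma~\ref{lem:admissible}(\ref{part:trivialpossibilities}). Your one genuine departure is the handling of $\beta\geq2$: you pass to a minimum-term nontrivial solution and observe that if $\beta\geq2$ the zeroing move would produce a nontrivial solution with strictly fewer terms, forcing $\beta=1$. This is valid (every forward move strictly decreases the number of terms, and the new top nonzero row is $\beta-1\geq1$, so the smaller solution is still nontrivial), and it lets you bypass Lemma~\ref{lem:nontrivial3terms} entirely, whereas the paper keeps $\beta$ arbitrary and invokes that lemma to show that the rows at index $\beta-1$ and below contribute at least as much as the $\beta=1$ analysis gives. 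Your route is leaner, at the cost of having to state explicitly that the bound is monotone in $n$, so that proving it for the minimal $n$ suffices and the $(3,3)$ exception causes no trouble since $(3+13)/4=4$; you gesture at this but should spell it out.

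There is one concrete gap: for $k\equiv1\pmod4$ you enumerate only Moves \ref{move:2}, \ref{move:3}, and \ref{move:4}, but Move~\ref{move:5} is defined for all odd~$k$, so the top row $\bigl((3k-3)/4,\,1,\,1\bigr)$ must also be considered. The fix is one line: that row contributes $(3k-3)/4+2=(3k+5)/4$ terms, and after the move row~$0$ has both its middle and right entries at least~$1$, forcing the shape $(0,1,2)$ and hence at least one further term, so $n\geq(3k+9)/4\geq(k+11)/4$. (Alternatively one could argue this pattern never arises because the priority reduction underlying Lemma~\ref{lem:onereduction} would apply Move~\ref{move:3} to it rather than Move~\ref{move:5}, but that requires tracking priorities, which your write-up does not do.) All of your other case computations, including the identification of the two $k=3$ patterns realizing $n=3$, check out against the paper's.
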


\begin{proof}
If the conclusion is satisfied for a pair $(k,n)$, then it is also satisfied for the same~$k$ and any larger~$n$. Therefore, using Lemma~\ref{lem:onereduction}, it suffices to consider nontrivial solutions corresponding to arrays whose top nonzero row can be reduced to 
\,$\vspacebeforeline{2pt}\renewcommand{\arraystretch}{1.3}
\begin{NiceArray}{ccc}[hvlines,corners]
0 & 0 & 0
\end{NiceArray}\vspace{3pt}$\,
under a single reduction move. Let $A$ be an array corresponding to such a solution, and let $\beta$ be the index of the top nonzero row of~$A$. Since the solution is nontrivial, $\beta\geq1$.

In the case of 
\,$\vspacebeforeline{2pt}\renewcommand{\arraystretch}{1.3}
\begin{NiceArray}{ccc}[hvlines,corners]
\frac{k}{4} & 0 & 0
\end{NiceArray}\vspace{3pt}$\, 
and $k\equiv0\pmod{4}$, after applying Move~\ref{move:2} we obtain an array~$A'$ with top nonzero row
\,$\vspacebeforeline{2pt}\renewcommand{\arraystretch}{1.3}
\begin{NiceArray}{ccc}[hvlines,corners]
c_{0,\beta-1} & c_{1,\beta-1} & c_{2,\beta-1}+1
\end{NiceArray}\vspace{3pt}$\,.
If $\beta=1$, Lemma~\ref{lem:numberofterms}(\ref{part:numberoftermstrivial1}) gives that $c_{0,0}+c_{1,0}+c_{2,0}\geq2$. If $\beta\geq2$, Lemma~\ref{lem:nontrivial3terms} gives that the solution corresponding to~$A'$ has at least~$3$ terms, meaning the sum of the entries in row $\beta-1$ and below of~$A$ is at least~$2$. For all~$\beta$, we therefore have that $n\geq k/4+2=(k+8)/4$.

In the case of 
\,$\vspacebeforeline{2pt}\renewcommand{\arraystretch}{1.3}
\begin{NiceArray}{ccc}[hvlines,corners]
k & 0 & 0
\end{NiceArray}\vspace{3pt}$\, 
for $k\equiv1\pmod{4}$ or $k\equiv3\pmod{4}$, we have that $n\geq k$. When $k\equiv1\pmod{4}$, since $k\geq5$, this implies that $n\geq(k+11)/4$. When $k\equiv3\pmod{4}$, it implies that $(k,n)=(3,3)$ or $n\geq(k+13)/4$. 

In the case of 
\,$\vspacebeforeline{2pt}\renewcommand{\arraystretch}{1.3}
\begin{NiceArray}{ccc}[hvlines,corners]
\frac{k}{2} & 0 & 0
\end{NiceArray}\vspace{3pt}$\, 
and $k\equiv2\pmod{4}$, after applying Move~\ref{move:2} we obtain an array~$A'$ with top nonzero row
\,$\vspacebeforeline{2pt}\renewcommand{\arraystretch}{1.3}
\begin{NiceArray}{ccc}[hvlines,corners]
c_{0,\beta-1} & c_{1,\beta-1}+1 & c_{2,\beta-1}
\end{NiceArray}\vspace{3pt}$\,.
If $\beta=1$, Lemma~\ref{lem:numberofterms}(\ref{part:numberoftermstrivial2}) gives that $c_{0,0}+c_{1,0}+c_{2,0}\geq1$. If $\beta\geq2$, Lemma~\ref{lem:nontrivial3terms} gives that the solution corresponding to~$A'$ has at least~$3$ terms, meaning the sum of the entries in row $\beta-1$ and below of~$A$ is at least~$2$. For all~$\beta$, since $k\geq6$, we have that $n\geq k/2+1\geq (k+10)/4$.

In the case of 
\,$\vspacebeforeline{2pt}\renewcommand{\arraystretch}{1.3}
\begin{NiceArray}{ccc}[hvlines,corners]
\frac{k-1}{4} & 0 & 1
\end{NiceArray}\vspace{3pt}$\, 
and $k\equiv1\pmod{4}$, after applying Move~\ref{move:3} we obtain an array~$A'$ with top nonzero row
\,$\vspacebeforeline{2pt}\renewcommand{\arraystretch}{1.3}
\begin{NiceArray}{ccc}[hvlines,corners]
c_{0,\beta-1} & c_{1,\beta-1} & c_{2,\beta-1}+1
\end{NiceArray}\vspace{3pt}$\,.
If $\beta=1$, Lemma~\ref{lem:numberofterms}(\ref{part:numberoftermstrivial1}) gives that $c_{0,0}+c_{1,0}+c_{2,0}\geq2$. If $\beta\geq2$, Lemma~\ref{lem:nontrivial3terms} gives that the solution corresponding to~$A'$ has at least~$3$ terms, meaning the sum of the entries in row $\beta-1$ and below of~$A$ is at least~$2$. For all~$\beta$, we therefore have that $n\geq (k-1)/4+3=(k+11)/4$.

In the case of 
\,$\vspacebeforeline{2pt}\renewcommand{\arraystretch}{1.3}
\begin{NiceArray}{ccc}[hvlines,corners]
\frac{3k-1}{4} & 0 & 1
\end{NiceArray}\vspace{3pt}$\, 
and $k\equiv3\pmod{4}$, after applying Move~\ref{move:3} we obtain an array~$A'$ with top nonzero row
\,$\vspacebeforeline{2pt}\renewcommand{\arraystretch}{1.3}
\begin{NiceArray}{ccc}[hvlines,corners]
c_{0,\beta-1} & c_{1,\beta-1}+1 & c_{2,\beta-1}+1
\end{NiceArray}\vspace{3pt}$\,.
When $\beta=1$, Lemma~\ref{lem:admissible}(\ref{part:trivialpossibilities}) gives that $c_{0,0}+c_{1,0}+c_{2,0}\geq1$. If $\beta\geq2$, Lemma~\ref{lem:nontrivial3terms} gives that the solution corresponding to~$A'$ has at least~$3$ terms, meaning the sum of the entries in row $\beta-1$ and below of~$A$ is at least~$1$. For all~$\beta$, we therefore have that $n\geq(3k-1)/4+2\geq(k+13)/4$.

In the case of 
\,$\vspacebeforeline{2pt}\renewcommand{\arraystretch}{1.3}
\begin{NiceArray}{ccc}[hvlines,corners]
\frac{k-1}{2} & 1 & 0
\end{NiceArray}\vspace{3pt}$\, 
for $k\equiv1\pmod{4}$ or $k\equiv3\pmod{4}$, after applying Move~\ref{move:4} we obtain an array~$A'$ with top nonzero row
\,$\vspacebeforeline{2pt}\renewcommand{\arraystretch}{1.3}
\begin{NiceArray}{ccc}[hvlines,corners]
c_{0,\beta-1} & c_{1,\beta-1}+1 & c_{2,\beta-1}
\end{NiceArray}\vspace{3pt}$\,.
When $\beta=1$, Lemma~\ref{lem:admissible}(\ref{part:trivialpossibilities}) gives that either $c_{0,0}=c_{2,0}=0, c_{1,0}=1$ or $c_{0,0}=c_{1,0}=0,\allowbreak c_{2,0}=2$. If $\beta\geq2$, Lemma~\ref{lem:nontrivial3terms} gives that the solution corresponding to~$A'$ has at least~$3$ terms, meaning the sum of the entries in row $\beta-1$ and below of~$A$ is at least~$2$. For all~$\beta$, we therefore have that $(k,n)=(3,3)$ or $n\geq(k-1)/2+3=(k+5)/2$. When $k\equiv1\pmod{4}$, this implies that $n\geq(k+11)/4$. When $k\equiv3\pmod{4}$, it implies that $(k,n)=(3,3)$ or $n\geq(k+13)/4$. 

In the case of
\,$\vspacebeforeline{2pt}\renewcommand{\arraystretch}{1.3}
\begin{NiceArray}{ccc}[hvlines,corners]
\frac{k-2}{4} & 1 & 0
\end{NiceArray}\vspace{3pt}$\,
and $k\equiv2\pmod{4}$, after applying Move~\ref{move:4} we obtain an array~$A'$ with top nonzero row
\,$\vspacebeforeline{2pt}\renewcommand{\arraystretch}{1.3}
\begin{NiceArray}{ccc}[hvlines,corners]
c_{0,\beta-1} & c_{1,\beta-1} & c_{2,\beta-1}+1
\end{NiceArray}\vspace{3pt}$\,.
If $\beta=1$, Lemma~\ref{lem:numberofterms}(\ref{part:numberoftermstrivial1}) gives that $c_{0,0}+c_{1,0}+c_{2,0}\geq2$. If $\beta\geq2$, Lemma~\ref{lem:nontrivial3terms} gives that the solution corresponding to~$A'$ has at least~$3$ terms, meaning the sum of the entries in row $\beta-1$ and below of~$A$ is at least~$2$. For all~$\beta$, we therefore have that $n\geq (k-2)/4+3=(k+10)/4$.

In the case of 
\,$\vspacebeforeline{2pt}\renewcommand{\arraystretch}{1.3}
\begin{NiceArray}{ccc}[hvlines,corners]
\frac{3k-3}{4} & 1 & 1
\end{NiceArray}\vspace{3pt}$\, 
and $k\equiv1\pmod{4}$, since $k\geq5$, we have that $n\geq(3k-3)/4+2\geq(k+11)/4$. 

In the case of 
\,$\vspacebeforeline{2pt}\renewcommand{\arraystretch}{1.3}
\begin{NiceArray}{ccc}[hvlines,corners]
\frac{k-3}{4} & 1 & 1
\end{NiceArray}\vspace{3pt}$\, 
and $k\equiv3\pmod{4}$, after applying Move~\ref{move:5} we obtain an array~$A'$ with top nonzero row
\,$\vspacebeforeline{2pt}\renewcommand{\arraystretch}{1.3}
\begin{NiceArray}{ccc}[hvlines,corners]
c_{0,\beta-1} & c_{1,\beta-1} & c_{2,\beta-1}+1
\end{NiceArray}\vspace{3pt}$\,. 
If $\beta=1$, Lemma~\ref{lem:numberofterms}(\ref{part:numberoftermstrivial1}) gives that $c_{0,0}+c_{1,0}+c_{2,0}\geq2$. If $\beta\geq2$, Lemma~\ref{lem:nontrivial3terms} gives that the solution corresponding to~$A'$ has at least~$3$ terms, meaning the sum of the entries in row $\beta-1$ and below of~$A$ is at least~$2$. For all~$\beta$, we therefore have that $n\geq (k-3)/4+4=(k+13)/4$.
\end{proof}

Propositions \ref{prop:sufficient} and~\ref{prop:necessary} together show Theorem~\ref{thm:characterization}.

\section*{Code availability}

We have implemented the algorithm described in this paper in \texttt{Python}. Our code is available at 
\begin{center}
\url{https://github.com/jlouwsma/unit-fraction-expansions}.
\end{center}

\section*{Acknowledgments}

This paper was inspired by~\cite{Lev}. We would like to thank Claire Levaillant for a number of helpful conversations, about both the results of~\cite{Lev} and the results described here. The author was partially supported by a Niagara University Summer Research Award.

\newpage
\appendix
\counterwithin{figure}{section}

\section{Solutions for \texorpdfstring{$k=3$}{k=3} and small \texorpdfstring{$n$}{n}}\label{app:tree3}

\begin{amssidewaysfigure}
\centering
\begin{tikzpicture}[scale=2.16712]
\node at (-1.075,4.9) {$n=1$};
\node at (-1.075,3.4) {$n=2$};
\node at (-1.075,1.8) {$n=3$};
\node at (-1.075,0) {$n=4$};
\node (a) at (3,4.9) {\textcolor{red}{$\renewcommand{\arraystretch}{1.3}
\begin{NiceArray}{ccc}[hvlines,corners]
1 & 0 & 0
\end{NiceArray}$}};
\node (b) at (3,3.4) {\textcolor{red}{$\renewcommand{\arraystretch}{1.3}
\begin{NiceArray}{ccc}[hvlines,corners]
0 & 2 & 0
\end{NiceArray}$}};
\node (c) at (0.5,1.8) {$\renewcommand{\arraystretch}{1.3}
\begin{NiceArray}{ccc}[hvlines,corners]
3 & 0 & 0\\
0 & 0 & 0
\end{NiceArray}$};
\node (d) at (3,1.8) {\textcolor{red}{$\renewcommand{\arraystretch}{1.3}
\begin{NiceArray}{ccc}[hvlines,corners]
0 & 1 & 2
\end{NiceArray}$}};
\node (e) at (5.55,1.8) {$\renewcommand{\arraystretch}{1.3}
\begin{NiceArray}{ccc}[hvlines,corners]
1 & 1 & 0\\
0 & 1 & 0
\end{NiceArray}$};
\node (f) at (-0.075,0) {$\renewcommand{\arraystretch}{1.3}
\begin{NiceArray}{ccc}[hvlines,corners]
2 & 2 & 0\\
0 & 0 & 0
\end{NiceArray}$};
\node (g) at (1.975,0) {$\renewcommand{\arraystretch}{1.3}
\begin{NiceArray}{ccc}[hvlines,corners]
2 & 0 & 1\\
0 & 0 & 1
\end{NiceArray}$};
\node (h) at (0.95,0) {$\renewcommand{\arraystretch}{1.3}
\begin{NiceArray}{ccc}[hvlines,corners]
0 & 1 & 1\\
0 & 1 & 1
\end{NiceArray}$};
\node (i) at (3,0) {\textcolor{red}{$\renewcommand{\arraystretch}{1.3}
\begin{NiceArray}{ccc}[hvlines,corners]
0 & 0 & 4
\end{NiceArray}$}};
\node (j) at (4.025,0) {$\renewcommand{\arraystretch}{1.3}
\begin{NiceArray}{ccc}[hvlines,corners]
1 & 1 & 0\\
0 & 0 & 2
\end{NiceArray}$};
\node (k) at (5.05,0) {$\renewcommand{\arraystretch}{1.3}
\begin{NiceArray}{ccc}[hvlines,corners]
1 & 0 & 2\\
0 & 1 & 0
\end{NiceArray}$};
\node (l) at (6.075,0) {$\renewcommand{\arraystretch}{1.3}
\begin{NiceArray}{ccc}[hvlines,corners]
1 & 1 & 0\\
1 & 0 & 0\\
0 & 1 & 0
\end{NiceArray}$};
\node (m) at (7.1,0) {$\renewcommand{\arraystretch}{1.3}
\begin{NiceArray}{ccc}[hvlines,corners]
0 & 3 & 0\\
0 & 1 & 0
\end{NiceArray}$};
\draw (a) edge node[left] {1} (b);
\draw (a) edge node[above left] {2} (c);
\draw (b) edge node[left] {1} (d);
\draw (b) edge node[above right] {4} (e);
\draw (c) edge node[left=2pt] {1} (f);
\draw (d) edge node[below right] {3} (g);
\draw (d) edge node[above left] {5} (h);
\draw (d) edge node[right] {1} (i);
\draw (d) edge node[right=2pt] {4} (j);
\draw (e) edge node[left=2pt] {1} (k);
\draw (e) edge node[below left] {4} (l);
\draw (e) edge node[above right] {1} (m);
\end{tikzpicture}
\caption{Arrays corresponding to solutions of~\eqref{eq:main} with $n\leq4$ and each $x_i\in S(3)$. The labels on the edges are the numbers of the moves used. The trivial solutions are shown in red.\label{fig:tree3}}
\end{amssidewaysfigure}

\section{Solutions for \texorpdfstring{$k=5$}{k=5} and small \texorpdfstring{$n$}{n}}\label{app:tree5}

\begin{amssidewaysfigure}
\begin{tikzpicture}[scale=2.16712]
\node at (-1,4.9) {$n=1$};
\node at (-1,3.775) {$n=2$};
\node at (-1,2.65) {$n=3$};
\node at (-1,1.425) {$n=4$};
\node at (-1,0) {$n=5$};
\node (a) at (3.6,4.9) {\textcolor{red}{$\renewcommand{\arraystretch}{1.3}
\begin{NiceArray}{ccc}[hvlines,corners]
1 & 0 & 0
\end{NiceArray}$}};
\node (b) at (3.6,3.775) {\textcolor{red}{$\renewcommand{\arraystretch}{1.3}
\begin{NiceArray}{ccc}[hvlines,corners]
0 & 2 & 0
\end{NiceArray}$}};
\node (c) at (3.6,2.65) {\textcolor{red}{$\renewcommand{\arraystretch}{1.3}
\begin{NiceArray}{ccc}[hvlines,corners]
0 & 1 & 2
\end{NiceArray}$}};
\node (d) at (2.1,1.425) {$\renewcommand{\arraystretch}{1.3}
\begin{NiceArray}{ccc}[hvlines,corners]
1 & 0 & 1\\
0 & 1 & 1
\end{NiceArray}$};
\node (e) at (3.6,1.425) {\textcolor{red}{$\renewcommand{\arraystretch}{1.3}
\begin{NiceArray}{ccc}[hvlines,corners]
0 & 0 & 4
\end{NiceArray}$}};
\node (f) at (6.3,1.425) {$\renewcommand{\arraystretch}{1.3}
\begin{NiceArray}{ccc}[hvlines,corners]
2 & 1 & 0\\
0 & 1 & 0
\end{NiceArray}$};
\node (g) at (0,0) {$\renewcommand{\arraystretch}{1.3}
\begin{NiceArray}{ccc}[hvlines,corners]
5 & 0 & 0\\
0 & 0 & 0
\end{NiceArray}$};
\node (h) at (1.2,0) {$\renewcommand{\arraystretch}{1.3}
\begin{NiceArray}{ccc}[hvlines,corners]
0 & 2 & 1\\
0 & 1 & 1
\end{NiceArray}$};
\node (i) at (2.4,0) {$\renewcommand{\arraystretch}{1.3}
\begin{NiceArray}{ccc}[hvlines,corners]
1 & 0 & 1\\
1 & 0 & 0\\
0 & 1 & 1
\end{NiceArray}$};
\node (j) at (3.6,0) {$\renewcommand{\arraystretch}{1.3}
\begin{NiceArray}{ccc}[hvlines,corners]
1 & 0 & 1\\
0 & 0 & 3
\end{NiceArray}$};
\node (k) at (4.8,0) {$\renewcommand{\arraystretch}{1.3}
\begin{NiceArray}{ccc}[hvlines,corners]
2 & 1 & 0\\
0 & 0 & 2
\end{NiceArray}$};
\node (l) at (6,0) {$\renewcommand{\arraystretch}{1.3}
\begin{NiceArray}{ccc}[hvlines,corners]
1 & 3 & 0\\
0 & 1 & 0
\end{NiceArray}$};
\node (m) at (7.2,0) {$\renewcommand{\arraystretch}{1.3}
\begin{NiceArray}{ccc}[hvlines,corners]
2 & 0 & 2\\
0 & 1 & 0
\end{NiceArray}$};
\draw (a) edge node[left] {1} (b);
\draw (b) edge node[left] {1} (c);
\draw (c) edge node[above left] {3} (d);
\draw (c) edge node[left] {1} (e);
\draw (b) edge node[above right] {4} (f);
\draw (a) edge node[above left] {2} (g);
\draw (d) edge node[above left] {1} (h);
\draw (d) edge node[right=2pt] {3} (i);
\draw (e) edge node[left] {3} (j);
\draw (c) edge node[right=3pt] {4} (k);
\draw (f) edge node[left=2pt] {1} (l);
\draw (f) edge node[above right] {1} (m);
\end{tikzpicture}
\caption{Arrays corresponding to solutions of~\eqref{eq:main} with $n\leq5$ and each $x_i\in S(5)$. The labels on the edges are the numbers of the moves used. The trivial solutions are shown in red.\label{fig:tree5}}
\end{amssidewaysfigure}

\bibliographystyle{amsplain}
\bibliography{UnitFractionExpansions}

\providecommand{\bysame}{\leavevmode\hbox to3em{\hrulefill}\thinspace}
\providecommand{\MR}{\relax\ifhmode\unskip\space\fi MR }
\providecommand{\MRhref}[2]{%
  \href{http://www.ams.org/mathscinet-getitem?mr=#1}{#2}
}
\providecommand{\href}[2]{#2}
\begin{thebibliography}{10}

\bibitem{ACF13}
R.~Arce-Nazario, F.~Castro, and R.~Figueroa, \emph{On the number of solutions of {$\sum_{i=1}^{11}\frac1{x_i}=1$} in distinct odd natural numbers}, J. Number Theory \textbf{133} (2013), no.~6, 2036--2046. \MR{3027952}

\bibitem{ACF17}
Rafael Arce-Nazario, Francis~N. Castro, and Ra\'{u}l Figueroa, \emph{On the equation {$\sum^n_{i=1}\frac{1}{x_i}=1$} in distinct odd or even numbers}, Contrib. Discrete Math. \textbf{11} (2017), no.~2, 63--78. \MR{3662290}

\bibitem{BE}
T.~D. Browning and C.~Elsholtz, \emph{The number of representations of rationals as a sum of unit fractions}, Illinois J. Math. \textbf{55} (2011), no.~2, 685--696. \MR{3020702}

\bibitem{B06}
Nechemia Burshtein, \emph{Improving solutions of {$\sum^k_{i=1}1/x_i=1$} with restrictions as required by {B}arbeau respectively by {J}ohnson}, Discrete Math. \textbf{306} (2006), no.~13, 1438--1439. \MR{2237726}

\bibitem{B07}
\bysame, \emph{The equation {$\sum^9_{i=1}{\frac{1}{x_i}}=1$} in distinct odd integers has only the five known solutions}, J. Number Theory \textbf{127} (2007), no.~1, 136--144. \MR{2351669}

\bibitem{B08}
\bysame, \emph{All the solutions of the equation {$\sum^{11}_{i=1}\frac{1}{x_i}=1$} in distinct integers of the form {$x_i\in 3^\alpha 5^\beta 7^\gamma$}}, Discrete Math. \textbf{308} (2008), no.~18, 4286--4292. \MR{2427761}

\bibitem{CEJ}
Yong-Gao Chen, Christian Elsholtz, and Li-Li Jiang, \emph{Egyptian fractions with restrictions}, Acta Arith. \textbf{154} (2012), no.~2, 109--123. \MR{2945656}

\bibitem{Curtiss}
D.~R. Curtiss, \emph{On {K}ellogg's {D}iophantine {P}roblem}, Amer. Math. Monthly \textbf{29} (1922), no.~10, 380--387. \MR{1520110}

\bibitem{E16}
Christian Elsholtz, \emph{Egyptian fractions with odd denominators}, Q. J. Math. \textbf{67} (2016), no.~3, 425--430. \MR{3556494}

\bibitem{EHP}
Christian Elsholtz, Clemens Heuberger, and Helmut Prodinger, \emph{The number of {H}uffman codes, compact trees, and sums of unit fractions}, IEEE Trans. Inform. Theory \textbf{59} (2013), no.~2, 1065--1075. \MR{3015716}

\bibitem{EG}
P.~Erd\H{o}s and R.~L. Graham, \emph{Old and new problems and results in combinatorial number theory}, Monographies de L'Enseignement Math\'{e}matique, vol.~28, Universit\'{e} de Gen\`eve, L'Enseignement Math\'{e}matique, Geneva, 1980. \MR{592420}

\bibitem{GK}
Darren Glass and Nathan Kaplan, \emph{Chip-firing games and critical groups}, A project-based guide to undergraduate research in mathematics---starting and sustaining accessible undergraduate research, Found. Undergrad. Res. Math., Birkh\"{a}user/Springer, Cham, 2020, pp.~107--152. \MR{4291925}

\bibitem{K14}
S.~V. Konyagin, \emph{Double exponential lower bound for the number of representations of unity by {E}gyptian fractions}, Math. Notes \textbf{95} (2014), no.~2, 277--281, Translation of Mat. Zametki \textbf{95} (2014), no. 2, 312--316. \MR{3267215}

\bibitem{Lev}
Claire Levaillant, \emph{On arithmetical structures on ${K}_9$}, \href{https://arxiv.org/abs/2311.03668}{arXiv:2311.03668} [math.NT].

\bibitem{OEIS}
{OEIS Foundation Inc.}, \emph{The {O}n-{L}ine {E}ncyclopedia of {I}nteger {S}equences}, 2024.

\bibitem{S03}
Csaba S\'{a}ndor, \emph{On the number of solutions of the {D}iophantine equation {$\sum^n_{i=1}\frac{1}{x_i}=1$}}, Period. Math. Hungar. \textbf{47} (2003), no.~1--2, 215--219. \MR{2025624}

\bibitem{Shiu}
Peter Shiu, \emph{Egyptian fraction representations of $1$ with odd denominators}, Math. Gaz. \textbf{93} (2009), no.~527, 271--276.

\end{thebibliography}

\end{document}